\pdfoutput=1
\documentclass[11pt]{amsart}

\usepackage[utf8]{inputenc}
\usepackage[T1]{fontenc}
\usepackage{lmodern}
\usepackage{mathtools}
\usepackage{amssymb}
\usepackage{graphicx}
\usepackage{xcolor}
\usepackage{array}
\usepackage{booktabs}
\usepackage{placeins}
\usepackage[protrusion=true,expansion=false]{microtype}
\usepackage[colorlinks=true,linkcolor=blue!60!black,citecolor=blue!60!black,urlcolor=blue!60!black]{hyperref}

\title[Bitwise triangular coordinates]{%
Bitwise triangular coordinates for central products of quaternion groups\\
{\normalfont\large Floretion base vectors, digitwise \texorpdfstring{\(S_3\)}{S3}-actions, and centralizer tiles}}

\author{Creighton Dement}
\address{Independent researcher}
\email{floretionguru@gmail.com}

\hypersetup{
  pdftitle={Bitwise triangular coordinates for central products of quaternion groups: Floretion base vectors, digitwise S3-actions, and centralizer tiles},
  pdfauthor={Creighton Dement},
  pdfsubject={Quaternion groups, triangular tilings, and centralizer tiles},
  pdfkeywords={floretions, quaternions, central products, extraspecial 2-groups}
}

\subjclass[2020]{20D15; 05E18; 51M20; 52C20}
\keywords{floretions, quaternions, central products, extraspecial 2-groups, group actions, triangular tilings, centralizers}

\date{}

\numberwithin{equation}{section}

\newtheorem{thm}{Theorem}[section]
\newtheorem{prop}[thm]{Proposition}
\newtheorem{lemma}[thm]{Lemma}
\newtheorem{cor}[thm]{Corollary}

\theoremstyle{definition}
\newtheorem{dfn}[thm]{Definition}
\newtheorem{exa}[thm]{Example}

\theoremstyle{remark}
\newtheorem{rem}[thm]{Remark}

\newcommand{\Fn}{\mathcal{F}_n}
\newcommand{\Dn}{\Delta_n}
\newcommand{\An}{\mathcal{A}_n}
\newcommand{\cyc}{\gamma}
\newcommand{\Orb}{\operatorname{Orb}}
\newcommand{\Stab}{\operatorname{Stab}}
\newcommand{\Cp}{\operatorname{C}_{+}}
\newcommand{\Cn}{\operatorname{C}_{-}}
\newcommand{\Cb}{\operatorname{C}_{\mathrm{tiles}}}
\newcommand{\xnor}{\mathbin{\odot}}
\newcommand{\band}{\mathbin{\&}}

\newcommand{\li}{\overleftarrow{\imath}}
\newcommand{\ri}{\overrightarrow{\imath}}

\newcommand{\placeholderfigure}[1]{%
\fbox{\begin{minipage}[c][0.18\textheight][c]{0.24\textwidth}
\centering\scriptsize Image file not found.\\[3pt]
{\tiny\path{#1}}
\end{minipage}}}

\newcommand{\maybeincludegraphics}[2][]{%
\IfFileExists{#2}{\includegraphics[#1]{#2}}{\placeholderfigure{#2}}}

\begin{document}

\begin{abstract}
We give a self-contained treatment of the floretion coordinate model: a
bitwise and triangular coordinate model for the central product of \(n\) copies
of the quaternion group \(Q_8\).  Positive basis elements are words in
\(\{1,2,4,7\}\), identified with \(i,j,k,e\), and the real algebra they generate
is the familiar tensor product \(\An\simeq\mathbb H^{\otimes n}\).  The
contribution is the coordinate model, in which Boolean multiplication,
triangular tiling geometry, digitwise \(S_3\)-symmetry, reflection
anti-automorphisms, centralizer tiles, and parity cancellation become visible
in one language.

A local XNOR/AND rule recovers quaternionic basis multiplication in every
order.  The associated centroid map is equivariant for the digitwise
\(S_3\)-action and the dihedral action on the triangle, while synchronized
cyclic changes of noncentral digits produce equilateral centroid triangles with
an explicit symbolic product center and a parity criterion for center
coincidence.  Odd digit permutations reverse multiplication order, yielding
same-axis commutation and more general rotated axis-landing criteria.
For every non-unit basis word, the signed centralizer has cardinality \(4^n\),
and its positive tile set occupies one half of the order-\(n\) tiling and splits
according to the sign of the common product \(cb=bc\).  We show that the
positive-product component \(C_+(b)\) is invariant under the global cyclic
digit action and, apart from the identity tile, decomposes into centered
equilateral three-orbits; the negative-product component \(C_-(b)\) selects one
vertex from each of a complementary family of such orbits.  This cyclic
structure also sharpens the component counts and the vanishing identities when
\(b^2=e_n\).  Finally, a parity-dependent family shows that products of
axis-symmetric elements may land on another triangular axis, lose all
triangular reflection symmetry in odd orders, or acquire enhanced symmetry in
order two.
\end{abstract}

\maketitle

\section{Introduction}

The quaternion group \(Q_8=\{\pm i,\pm j,\pm k,\pm e\}\) admits a very compact
four-symbol basis notation.  Throughout, we use the alphabet
\[
  \{1,2,4,7\},
\]
identified with the quaternionic symbols by
\[
  1\leftrightarrow i,\qquad 2\leftrightarrow j,\qquad
  4\leftrightarrow k,\qquad 7\leftrightarrow e.
\]
Words of length \(n\) in this alphabet are multiplied coordinate by coordinate,
with the local quaternionic signs collected into one global sign.  We call these
words \emph{floretion base vectors} of order \(n\).

The abstract group obtained in this way is standard.  It is the central product
of \(n\) copies of \(Q_8\), equivalently the extraspecial \(2\)-group specified
by that central product.  The real algebra generated by the basis words is
\[
  \An \cong \mathbb H^{\otimes n},
\]
and after complexification one obtains
\[
  \An\otimes_{\mathbb R}\mathbb C \cong M_{2^n}(\mathbb C).
\]
Thus the purpose of the paper is not to introduce a new abstract algebra or a
new abstract finite group.  Rather, it studies a distinguished word basis for
\(\An\) and the structures that become visible in this basis.  The same
coordinates support a Boolean multiplication rule, a recursive triangular
tiling, digitwise \(S_3\)-actions, reflection symmetries, and centralizer tile
sets.  The payoff is a concrete finite model in which multiplication and
symmetry can be studied simultaneously.

The starting point is a two-bit encoding of the four basis symbols.  In this
encoding the unsigned part of the local product is given by bitwise XNOR, while
the sign is determined by a short Boolean expression involving XNOR and AND.
Proposition~\ref{prop:local-quaternion} identifies this rule with the usual
quaternion multiplication table.  The Boolean form gives a table-free local
multiplication rule that can be applied independently in each coordinate and
then assembled by collecting signs.  It also supports efficient computation,
since multiplication in \(\An\) can be organized by precomputed index and sign
arrays.

The second ingredient is geometric.  The same alphabet \(\{1,2,4,7\}\) labels
the four subtriangles obtained by joining the midpoints of the sides of an
equilateral triangle: three corner subtriangles labelled by \(1,2,4\), and a
central subtriangle labelled by \(7\).  Iterating this subdivision gives a
recursive triangular tiling.  The centroid map
\[
  P:\Dn\longrightarrow \mathbb R^2
\]
associates to each basis word the centroid of its tile.  A key point is that the
central digit \(7\) produces no displacement of the centroid but reverses the
orientation of all subsequent choices.

The results proved below can be summarized as follows.
\begin{enumerate}
\item The Boolean multiplication rule based on XNOR and AND agrees with
quaternionic basis multiplication in order one.  Since multiplication in order
\(n\) is digitwise, this gives the rule in every order.
\item The centroid map is equivariant with respect to the digitwise action of
\(S_3\) on the digits \(\{1,2,4\}\) and the dihedral action on the equilateral
triangle.
\item The cyclic action \(1\mapsto2\mapsto4\mapsto1\), applied globally or to
selected noncentral coordinates, produces equilateral triangles of centroids.  For a
synchronized local cycle, the cyclically ordered product gives an explicitly
signed symbolic center word, and a parity criterion determines exactly when
its centroid is the Euclidean center of the triangle.
\item Even digit permutations act by algebra automorphisms, while odd digit
permutations act by algebra anti-automorphisms.  This yields a general
axis-landing criterion for products of elements symmetric about possibly
different triangular axes; in the same-axis case the criterion reduces to
ordinary commutation.
\item For every non-unit basis word \(b\), the centralizer in the signed group
has cardinality \(4^n\), and its positive tile set occupies exactly one half of
the order-\(n\) tiling.  This centralizer tile set further splits into signed
product components \(C_+(b)\) and \(C_-(b)\).  The component \(C_+(b)\) is
invariant under the global cyclic digit action and decomposes, apart from the
identity, into centered equilateral orbits; \(C_-(b)\) is a transversal of a
complementary family of such orbits.
\item Signed centralizer components give a parity-dependent family of
axis-landing examples.  In the family analyzed below, products of elements
symmetric with respect to one triangular axis may land on a different triangular
axis or on no triangular axis.  In the order-two case, an accidental symmetry
enhancement can even make the product land on a larger symmetry class.
\end{enumerate}

Appendices are included first to prove the product formulas used in the
parity-dependent axis-landing example, and then to record some notational
changes over time and give an off-ramp to integer sequences.

Subsequent work \cite{DementEquilateral2026} develops the equilateral-completion
problem for these same triangular coordinates in greater detail, including
local and nonlocal completion, product points, and reflection symmetry.

\section{Base vectors and multiplication}

\begin{dfn}[Base vectors]
Fix \(n\geq 1\). A floretion base vector of order \(n\) is a word
\[
  b=b_1b_2\cdots b_n
\]
of length \(n\) in the alphabet \(\{1,2,4,7\}\). The set of all positive base vectors of order \(n\) is denoted by
\[
  \Dn=\{1,2,4,7\}^n.
\]
The identity element is the word
\[
  e_n:=77\cdots 7.
\]
\end{dfn}

\begin{dfn}[Signed group and real algebra]
Set
\[
  \Fn:=\{\pm b:b\in\Dn\}.
\]
This is the signed group of base vectors of order \(n\). We also write
\[
  \An:=\left\{\sum_{b\in\Dn} q_b b:q_b\in\mathbb{R}\right\}
\]
for the real algebra generated by \(\Dn\), with product extended bilinearly from the multiplication of base vectors.
\end{dfn}

\begin{rem}[Relation with central products and tensor products]
For \(n=1\) the signed basis group is \(Q_8\). More generally, \(\Fn\) is the
central product of \(n\) copies of \(Q_8\). Explicitly, it is obtained from
\(Q_8^n\) by quotienting the central subgroup
\[
  K_n=\{(\varepsilon_1,\ldots,\varepsilon_n):
       \varepsilon_r\in\{\pm 1\},\ \varepsilon_1\cdots\varepsilon_n=1\}.
\]
Thus
\[
  \Fn\simeq Q_8^n/K_n,\qquad
  |\Fn|=\frac{8^n}{2^{n-1}}=2\cdot 4^n.
\]
The real algebra generated by the basis words is the tensor product
\[
  \An\cong \mathbb H^{\otimes n},
\]
where a word \(b_1\cdots b_n\) corresponds to the tensor of the associated
quaternionic basis symbols.  Hence
\[
  \An\otimes_{\mathbb R}\mathbb C\cong M_{2^n}(\mathbb C).
\]
These identifications place the construction in standard finite group and
associative algebra theory.  The digitwise description is retained because it is
the one that interacts directly with the Boolean multiplication rule and the
triangular tiling.
\end{rem}

\subsection{The local quaternionic rule}

We use the two-bit encoding shown in Table~\ref{tab:encoding}. In octal notation this is equivalent to the three-bit representation
\[
  1=001,\qquad 2=010,\qquad 4=100,\qquad 7=111,
\]
where the XOR of the three bits is \(1\).

\begin{table}[htbp]
\centering
\begin{tabular}{cccc}
\toprule
octal digit & three-bit form & two-bit code & quaternionic symbol\\
\midrule
\(1\) & \(001\) & \((00)\) & \(i\)\\
\(2\) & \(010\) & \((01)\) & \(j\)\\
\(4\) & \(100\) & \((10)\) & \(k\)\\
\(7\) & \(111\) & \((11)\) & \(e\)\\
\bottomrule
\end{tabular}
\caption{Local encoding of the four allowed digits. The two-bit code is the one used in the XNOR/AND rule.}
\label{tab:encoding}
\end{table}

\begin{dfn}[Bitwise rule in order one]
Let
\[
  x=(ab),\qquad y=(cd),
\]
with \(a,b,c,d\in\{0,1\}\). Define
\[
  x\cdot y=(-1)^{m+1}(a\xnor c)(b\xnor d),
\]
where \(\xnor\) denotes bitwise XNOR and
\[
  m=(b\band c)+((a\xnor b)\band d)+(a\band(c\xnor d)).
\]
Here \(\band\) denotes bitwise AND.
\end{dfn}

\begin{table}[htbp]
\centering
\begin{tabular}{c|cccc}
\(\cdot\) & \(i\) & \(j\) & \(k\) & \(e\)\\
\hline
\(i\) & \(-e\) & \(k\) & \(-j\) & \(i\)\\
\(j\) & \(-k\) & \(-e\) & \(i\) & \(j\)\\
\(k\) & \(j\) & \(-i\) & \(-e\) & \(k\)\\
\(e\) & \(i\) & \(j\) & \(k\) & \(e\)
\end{tabular}
\caption{Quaternionic multiplication in the convention used throughout.}
\label{tab:qmult}
\end{table}

\begin{prop}[Local equivalence with quaternions]\label{prop:local-quaternion}
The rule of the preceding definition agrees with the quaternionic multiplication table in Table~\ref{tab:qmult}.
\end{prop}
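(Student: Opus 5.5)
The proposition is a finite statement: there are only sixteen ordered pairs \((x,y)\) of two-bit codes. I would verify it by splitting the product into its unsigned part and its sign, treating each part structurally so that the case check stays organized.

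\emph{Step 1 (unsigned part).} Under the encoding of Table~\ref{tab:encoding}, I will show that bitwise XNOR turns \(\{00,01,10,11\}\) into a Klein four-group with identity \(11=e\). Indeed \(u\xnor v=\overline{u\oplus v}\), so \(u\mapsto u\oplus 11\) is a group isomorphism from \((\{0,1\}^2,\oplus)\) to \((\{0,1\}^2,\xnor)\), and it sends \(00\) to \(11\). This gives:
\begin{itemize}
\item \(x\xnor 11=x\) and \(11\xnor y=y\), so \(e\) acts as an identity;
\item \(x\xnor x=11\), so every square is \(\pm e\);
\item the XNOR of any two distinct codes among \(00,01,10\) is the third, e.g.\ \(00\xnor01=10\), i.e.\ \(i,j\mapsto k\).
\end{itemize}
This matches the unsigned entries of Table~\ref{tab:qmult}, since \(Q_8/\{\pm e\}\) is the Klein group.

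\emph{Step 2 (sign).} It remains to show that \(m\) is odd exactly when the table entry is positive, because the sign is \((-1)^{m+1}\). I will group the sixteen cases into three families.
\begin{itemize}
\item \emph{Row and column of \(e\) (seven cases).} For \(x=11\), we have \(a\xnor b=1\), so \(m=c+d+(c\xnor d)\). This is odd for every \((c,d)\): it equals \(1\) if \(c=d=0\), \(1\) if exactly one of \(c,d\) is \(1\), and \(3\) if \(c=d=1\). The symmetric computation with \(y=11\) gives \(m=b+(a\xnor b)+a\), which is also always odd.
\item \emph{Diagonal \(i,j,k\) (three cases).} Here I check directly that \(m\) is even, so that \(i^2=j^2=k^2=-e\). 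For example, \(i\cdot i\) gives \(m=0\).
\item \emph{Off-diagonal pairs among \(i,j,k\) (six cases).} I check that \(m\) is odd for the cyclic products \(ij, jk, ki\) and even for the reversed ones. For example, \(i\cdot j\) gives \(m=1\) and \(j\cdot i\) gives \(m=0\).
\end{itemize}

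The only step that needs care is Step 2, since the expression for \(m\) has no evident structural meaning and must be checked case by case. I would guard against slips by computing each of the three terms of \(m\) separately in a small table with columns \(b\band c\), \((a\xnor b)\band d\), and \(a\band(c\xnor d)\). I would also use anticommutativity as a cross-check: for \(x\ne y\) both in \(\{i,j,k\}\), the parities of \(m(x,y)\) and \(m(y,x)\) must differ.
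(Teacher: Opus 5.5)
Your proposal is correct and follows the same route as the paper: a direct check of the sixteen products, separating the unsigned XNOR part from the parity of \(m\). Your Klein-group description of XNOR (via \(u\mapsto u\oplus 11\)) and the uniform parity computation for the row and column of \(e\) only organize that check more efficiently, and your remaining nine claims all hold: \(m=0\) on the diagonal and for \(ji,kj,ik\), and \(m=1\) for \(ij,jk,ki\).
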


\begin{proof}
The verification is direct on the sixteen possible products. The unsigned part of the product is
\[
  (a\xnor c)(b\xnor d),
\]
while the sign is determined by the parity of
\[
  m=(b\band c)+((a\xnor b)\band d)+(a\band(c\xnor d)).
\]
For example,
\[
  k\cdot j=(10)(01).
\]
The unsigned part is
\[
  (1\xnor 0,\ 0\xnor 1)=(0,0)=i.
\]
The three sign contributions are
\[
  0\band 0=0,\qquad (1\xnor 0)\band 1=0,\qquad
  1\band(0\xnor 1)=0.
\]
Hence \(m=0\), so the sign is \((-1)^{m+1}=-1\), and \(k\cdot j=-i\). The remaining cases give exactly Table~\ref{tab:qmult}.
\end{proof}

\begin{dfn}[Digitwise multiplication in order \(n\)]
Let
\[
  b=b_1\cdots b_n,\qquad c=c_1\cdots c_n
\]
be two base vectors in \(\Dn\). For each position \(r\), write the local quaternionic product as
\[
  b_rc_r=\varepsilon_r d_r,\qquad
  \varepsilon_r\in\{\pm 1\},\quad d_r\in\{1,2,4,7\}.
\]
We define
\[
  bc=\left(\prod_{r=1}^n\varepsilon_r\right)d_1d_2\cdots d_n.
\]
\end{dfn}

\begin{thm}[Bitwise rule in every order]
Applying the local bitwise rule in each position and collecting the local signs gives exactly the multiplication of base vectors of order \(n\).
\end{thm}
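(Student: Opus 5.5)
The plan is to reduce the theorem to Proposition~\ref{prop:local-quaternion} by unwinding definitions. The multiplication of base vectors of order \(n\) is fixed by the digitwise definition: for \(b=b_1\cdots b_n\) and \(c=c_1\cdots c_n\), one writes \(b_rc_r=\varepsilon_r d_r\) using the quaternionic table (Table~\ref{tab:qmult}), and sets \(bc=(\prod_r\varepsilon_r)\,d_1\cdots d_n\). The bitwise procedure in the statement replaces each local quaternionic product by the order-one bitwise rule. Concretely, encode \(b_r=(a_rb'_r)\) and \(c_r=(c'_rd_r')\) via Table~\ref{tab:encoding}. The local output is then the word \((a_r\xnor c'_r)(b'_r\xnor d'_r)\) with sign \((-1)^{m_r+1}\), and these signs are multiplied.

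The key step is to apply Proposition~\ref{prop:local-quaternion} position by position. For each \(r\) it gives
\[
  (-1)^{m_r+1}=\varepsilon_r,\qquad (a_r\xnor c'_r)(b'_r\xnor d'_r)=d_r
\]
after decoding. Hence the bitwise output word equals \(d_1\cdots d_n\), and the collected sign \(\prod_r(-1)^{m_r+1}=(-1)^{\sum_r(m_r+1)}\) equals \(\prod_r\varepsilon_r\). Both sides therefore coincide for positive base vectors.

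It then remains to extend to signed elements of \(\Fn\) and bilinearly to \(\An\). Signs are handled as global scalars on both sides, so \((\sigma b)(\tau c)=\sigma\tau\,bc\) in either description. Bilinear extension is identical by definition.

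I would also remark briefly that this multiplication agrees with the central product \(Q_8^n/K_n\) and with \(\mathbb H^{\otimes n}\). Under either identification, the product of tuples is componentwise, and the local signs can be moved to a single global sign because \(-1\) is central and the signs are identified modulo \(K_n\). This justifies that the digitwise definition is the correct group law.

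I expect no real obstacle. The only point needing care is the bookkeeping that the sign collection is a homomorphism, i.e.\ that the parity of \(\sum_r(m_r+1)\) is the right global sign independent of the order of collection. This holds because the signs \(\pm1\) commute.
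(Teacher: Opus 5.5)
Your proposal is correct and follows the paper's proof. Both apply Proposition~\ref{prop:local-quaternion} in each coordinate and then observe that the order-\(n\) product is defined precisely by multiplying position by position and collecting the local signs; your extra remarks on signed elements, bilinear extension, and the central-product identification are harmless but not needed.
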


\begin{proof}
By Proposition~2.5, the bitwise rule computes each local product \(b_rc_r\) in order one, including its sign. The definition of order-\(n\) multiplication is precisely to multiply position by position and collect the product of the local signs. Thus correctness in order one implies correctness in every order.
\end{proof}

\begin{exa}
For instance,
\[
  iji\cdot jek=(i\cdot j)(j\cdot e)(i\cdot k)=k\,j\,(-j)=-kjj.
\]
\end{exa}

\section{The centroid algorithm}

The basic geometric step is a map from base vectors to points of the plane. The point associated with a word will be the centroid of the corresponding triangular tile.

\begin{dfn}[Elementary vectors]
Set
\[
\begin{aligned}
  v(1)&=(\cos 210^\circ,\sin 210^\circ),\\
  v(2)&=(\cos 90^\circ,\sin 90^\circ),\\
  v(4)&=(\cos 330^\circ,\sin 330^\circ),
\end{aligned}
\]
and
\[
  v(7)=(0,0).
\]
\end{dfn}

\begin{dfn}[Centroid map]
Fix an initial scale \(d_1>0\), and let \(b=b_1\cdots b_n\in\Dn\). Define
\[
  P(b)=\sum_{r=1}^n \sigma_r d_r v(b_r),
  \qquad
  d_r=\frac{d_1}{2^{r-1}},
\]
where \(\sigma_r\in\{\pm 1\}\) is the sign accumulated from the digits \(7\) occurring before position \(r\):
\[
  \sigma_r=(-1)^{\#\{s<r:b_s=7\}}.
\]
\end{dfn}

\begin{rem}
The digit \(7\) produces no displacement of its own, but it reverses the direction of all subsequent displacements. This reversal is essential for compatibility with the recursive triangular tiling.
\end{rem}

\begin{dfn}[Orientation of a tile]
For \(b\in\Dn\), set
\[
  N_{124}(b):=\#\{r:b_r\in\{1,2,4\}\}.
\]
The tile associated with \(b\) is oriented upward if and only if
\[
  N_{124}(b)\equiv n \pmod 2.
\]
Otherwise it is oriented downward.
\end{dfn}

\begin{rem}
The map \(P\) determines only the centroid. The orientation rule supplies the additional information needed to determine the displayed equilateral triangle.
\end{rem}

\begin{prop}[Compatibility with the recursive triangular tiling]
Let \(T_\varnothing\) be an initial equilateral triangle centered at the origin, and let \(R_0>0\) be its circumradius. At each step subdivide an equilateral triangle into the four equilateral triangles obtained by joining the midpoints of its sides: three corner triangles, labelled by \(1,2,4\), and the central triangle, labelled by \(7\). The corner directions are those of the elementary vectors \(v(1),v(2),v(4)\).

Put \(d_1=R_0/2\) and \(d_r=d_1/2^{r-1}\). If \(T_b\) is the triangle in the recursive tiling determined by the word \(b=b_1\cdots b_n\in\Dn\), then the centroid of \(T_b\) is \(P(b)\). Moreover \(T_b\) is oriented upward if and only if
\[
  N_{124}(b)\equiv n\pmod 2.
\]
\end{prop}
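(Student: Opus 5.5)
We argue by induction on the length of the word, proving a slightly stronger statement that tracks three quantities for each prefix \(b_1\cdots b_r\): the centroid, the circumradius and the orientation of \(T_{b_1\cdots b_r}\). Write \(\tau_r\in\{\pm1\}\) for the orientation of \(T_{b_1\cdots b_r}\), with \(+1\) meaning upward. The claim to prove for every \(0\le r\le n\) is:
\begin{enumerate}
\item the centroid of \(T_{b_1\cdots b_r}\) is \(\sum_{s\le r}\sigma_s d_s v(b_s)\);
\item its circumradius is \(R_0/2^r\);
\item its orientation is \(\tau_r=(-1)^{\#\{s\le r:b_s=7\}}\).
\end{enumerate}
In particular \(\tau_{r}=\sigma_{r+1}\). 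The base case \(r=0\) is the initial triangle \(T_\varnothing\): centroid at the origin, circumradius \(R_0\), upward, so \(\tau_0=+1\).

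The key step is a one-step subdivision lemma, from elementary Euclidean geometry. Let \(T\) be an equilateral triangle with centroid \(c\), circumradius \(R\) and orientation \(\tau\). Its vertices are \(c+\tau R\,v(x)\) for \(x\in\{1,2,4\}\); this holds because \(v(2),v(1),v(4)\) point at \(90^\circ,210^\circ,330^\circ\), which are the vertex directions of an upward triangle, and a downward triangle is the point reflection of an upward one. The lemma then has two parts.
\begin{enumerate}
\item Corner subtriangle. Take the corner with vertex \(V\) and let \(U,W\) be the other two vertices. Its vertices are \(V,(V+U)/2,(V+W)/2\). Since \(U+V+W=3c\), its centroid is \((V+c)/2=c+\tau (R/2)v(x)\). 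It is homothetic to \(T\) with ratio \(1/2\) about \(V\), so it has circumradius \(R/2\) and the same orientation \(\tau\).
\item Central subtriangle. It is the image of \(T\) under the homothety of ratio \(-1/2\) about \(c\). So it has centroid \(c\), circumradius \(R/2\) and orientation \(-\tau\).
\end{enumerate}

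For the inductive step, apply the lemma to \(T_{b_1\cdots b_{r-1}}\), whose circumradius is \(R_0/2^{r-1}\). The displacement added at step \(r\) is
\[
\tau_{r-1}\,\frac{R_0}{2^{r}}\,v(b_r)=\sigma_r d_r v(b_r),
\]
using \(d_r=R_0/2^r\) and \(v(7)=0\). The orientation is unchanged when \(b_r\in\{1,2,4\}\) and flips when \(b_r=7\), which is exactly the recursion for \(\tau_r\). At \(r=n\) this gives the centroid \(P(b)\). It also gives \(\tau_n=(-1)^{\#7}=(-1)^{n-N_{124}(b)}\), so \(T_b\) is upward if and only if \(N_{124}(b)\equiv n\pmod 2\).

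The main obstacle is the labelling convention rather than the computation. The statement says the corner directions are ``those of the elementary vectors''. For a downward triangle this has to be read as follows: the corner labelled \(x\) is the one at the vertex in direction \(\tau v(x)\), which is \(-v(x)\) when \(\tau=-1\). The sign \(\sigma_r\) in the definition of \(P\) encodes precisely this choice. I would therefore first fix that reading explicitly. The natural way to do so is to say that labels are transported by the homotheties above, which is the only reading compatible with the reversal described in the remark after the centroid map. After that I would check the vertex-direction formula for a downward triangle, and the rest of the argument is routine.
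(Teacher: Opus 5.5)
Your proposal is correct and follows essentially the same route as the paper. Both argue by induction on the word length: a corner digit moves the centroid by half the current circumradius toward the (orientation-adjusted) vertex, while the digit \(7\) fixes the centroid and flips the orientation. Your homothety lemma and your explicit rule for labelling the corners of a downward triangle spell out what the paper's shorter proof assumes implicitly.
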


\begin{proof}
We argue by induction on the length of the word. The empty word corresponds to the initial triangle, whose centroid is the origin. Suppose the centroid of the triangle associated with the prefix \(b_1\cdots b_{r-1}\) is known. At level \(r-1\), the circumradius of the current triangle is \(R_0/2^{r-1}\), and the centroid of a corner subtriangle lies at distance
\[
  \frac12\,\frac{R_0}{2^{r-1}}=\frac{R_0}{2^r}=d_r
\]
from the centroid of the current triangle, in the direction of the corresponding vertex.

At step \(r\), the directions have been reversed once for each earlier occurrence of the central digit \(7\). Hence the direction sign is
\[
  \sigma_r=(-1)^{\#\{s<r:b_s=7\}}.
\]
If \(b_r\in\{1,2,4\}\), the corresponding corner choice adds the displacement \(\sigma_r d_r v(b_r)\). If \(b_r=7\), the central subtriangle is chosen: the centroid is unchanged, but the orientation is reversed for subsequent choices. Summing the contributions over \(r=1,\ldots,n\) gives the formula for \(P(b)\).

Finally, the three corner subtriangles have the same orientation as the current triangle, while the central subtriangle has the opposite orientation. Thus the final orientation is upward if and only if the number of digits \(7\) is even. Since
\[
  \#\{r:b_r=7\}=n-N_{124}(b),
\]
this condition is equivalent to \(N_{124}(b)\equiv n\pmod 2\).
\end{proof}

\begin{rem}
The factor \(\sigma_r\) is not needed merely for \(S_3\)-equivariance: the map obtained by putting all \(\sigma_r=1\) would also be \(S_3\)-equivariant. Its role is instead to make the formula compatible with the recursive triangular tiling. The digit \(7\) selects the central triangle; it produces no displacement of the centroid, but it reverses the orientation for later choices.
\end{rem}

\begin{lemma}[No cancellation of the first nonzero step]
If \(b\ne 77\cdots 7\), then \(P(b)\ne 0\).
\end{lemma}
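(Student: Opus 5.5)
Let \(r_0\) be the first position with \(b_{r_0}\in\{1,2,4\}\); it exists because \(b\ne e_n\). All earlier digits are \(7\) and contribute \(v(7)=0\), so
\[
  P(b)=\sigma_{r_0}d_{r_0}v(b_{r_0})+\sum_{r>r_0}\sigma_r d_r v(b_r).
\]
The plan, as the lemma's title suggests, is to show that the first nonzero step cannot be cancelled by the tail. I would not use a triangle-inequality bound on the tail: \(\sum_{r>r_0}d_r<d_{r_0}\) holds, but a single backward tail step of length \(d_{r_0}/2\) is not dominated by the crude estimate once the angle is taken into account. Instead I would project onto the direction \(u:=\sigma_{r_0}v(b_{r_0})\) and show \(\langle P(b),u\rangle>0\).

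The key fact is that for \(x,y\in\{1,2,4\}\) the inner product \(\langle v(x),v(y)\rangle\) equals \(1\) if \(x=y\) and \(-\tfrac12\) otherwise, and \(\langle v(7),\cdot\rangle=0\). Hence every tail term satisfies \(\langle \sigma_r d_r v(b_r),u\rangle\ge -d_r\). This gives only
\[
  \langle P(b),u\rangle\ \ge\ d_{r_0}-\sum_{r>r_0}d_r\ >\ 0,
\]
and the inequality is strict because \(\sum_{r>r_0}d_r=d_{r_0}(1-2^{-(n-r_0)})<d_{r_0}\). So the projection argument already suffices: the worst case \(-d_r\) per step is exactly the triangle-inequality bound, but in a fixed direction. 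Therefore \(P(b)\ne0\).

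This route is self-contained. An alternative would use the preceding Proposition: \(P(b)\) is the centroid of a subtile of the corner triangle \(T_{b_1\cdots b_{r_0}}\), whose closure does not contain the origin, the centroid of the ambient triangle \(T_{7\cdots7}\) at level \(r_0-1\). I expect no real obstacle. The only point needing care is strictness, which comes from the finite geometric sum being strictly less than \(d_{r_0}\), and the observation that the digits \(7\) before \(r_0\) affect only the sign \(\sigma_{r_0}\), which is absorbed into \(u\).
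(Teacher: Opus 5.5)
Your proof is correct and is essentially the paper's argument: the first nonzero step has length \(d_{r_0}\), while the tail lengths sum to \(d_{r_0}(1-2^{-(n-r_0)})<d_{r_0}\), so the tail cannot cancel it. Your stated reason for avoiding the triangle inequality is mistaken: \(|P(b)|\ge d_{r_0}-\sum_{r>r_0}d_r>0\) holds verbatim with no angular considerations, and this is exactly the paper's proof, so your projection onto \(u\) only reproduces that same bound in a fixed direction.
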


\begin{proof}
Let \(m\) be the first position such that \(b_m\ne 7\). The step at position \(m\) has length \(d_m\). The sum of the lengths of all subsequent steps is
\[
  d_{m+1}+\cdots+d_n<d_m.
\]
Therefore the first nonzero step cannot be cancelled by the remaining tail of the path, regardless of the later directions. Hence \(P(b)\ne 0\).
\end{proof}

\section[The digitwise S3-action and centroid geometry]{The digitwise \texorpdfstring{\(S_3\)}{S3}-action and centroid geometry}

The identification between permutations of the three vertices and the dihedral symmetries of an equilateral triangle is standard; see, for example, \cite{Armstrong}.

\begin{dfn}[Digitwise action]
The group \(S_3\) acts on \(\Dn\) by permuting the digits \(\{1,2,4\}\) in each position and fixing the digit \(7\).
\end{dfn}

\begin{dfn}[Dihedral action]
Let \(D_3\) be the symmetry group of the equilateral triangle generated by the three elementary directions \(v(1),v(2),v(4)\). We identify \(D_3\) with the group of permutations of these three vectors. If \(\pi\in S_3\), let \(\rho(\pi)\in D_3\) be the unique linear symmetry of the plane such that
\[
  \rho(\pi)v(a)=v(\pi(a))\qquad\text{for }a\in\{1,2,4\}.
\]
Also set \(\rho(\pi)v(7)=v(7)=0\).
\end{dfn}

\begin{thm}[Equivariance of the centroid map]
\label{thm:centroid-equivariance}
For every \(\pi\in S_3\) and every \(b\in\Dn\),
\[
  P(\pi b)=\rho(\pi)P(b),
\]
where \(\pi b\) denotes the digitwise application of \(\pi\) to the word \(b\).
\end{thm}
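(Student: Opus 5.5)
The proof is a direct term-by-term comparison of the two sums defining \(P(\pi b)\) and \(\rho(\pi)P(b)\). It uses only linearity of \(\rho(\pi)\) and the fact that \(\pi\) fixes the digit \(7\).

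First I will check that \(\rho(\pi)\) is well defined as a \emph{linear} map. The vectors \(v(1),v(2),v(4)\) are the vertices of an equilateral triangle centred at the origin, so \(v(1)+v(2)+v(4)=0\), and any two of them form a basis of \(\mathbb R^2\). Define \(\rho(\pi)\) on a basis, say \(v(1)\mapsto v(\pi(1))\) and \(v(2)\mapsto v(\pi(2))\). The relation \(v(4)=-v(1)-v(2)\) then forces \(\rho(\pi)v(4)=v(\pi(4))\), because \(v(\pi(1))+v(\pi(2))+v(\pi(4))=0\) as well. This map permutes the vertices of the triangle, so it is an isometry in \(D_3\). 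The convention \(\rho(\pi)v(7)=0=v(7)\) is consistent with linearity. Hence
\[
  \rho(\pi)v(a)=v(\pi(a))\qquad\text{for all } a\in\{1,2,4,7\},
\]
where \(\pi(7)=7\).

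Next I will compare the signs. Write \(b'=\pi b\), so that \(b'_r=\pi(b_r)\). Since \(\pi\) maps \(\{1,2,4\}\) to itself and fixes \(7\), we have \(b'_s=7\) if and only if \(b_s=7\). Therefore
\[
  \#\{s<r:b'_s=7\}=\#\{s<r:b_s=7\},
\]
so the accumulated signs agree: \(\sigma_r(\pi b)=\sigma_r(b)\) for every \(r\). The scales \(d_r\) do not depend on the word at all.

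Finally, I will combine the two facts using linearity of \(\rho(\pi)\):
\[
  P(\pi b)=\sum_{r=1}^n \sigma_r(b)\,d_r\,v(\pi(b_r))
  =\sum_{r=1}^n \sigma_r(b)\,d_r\,\rho(\pi)v(b_r)
  =\rho(\pi)P(b).
\]
The only step needing any care is the well-definedness and linearity of \(\rho(\pi)\), and the relation \(v(1)+v(2)+v(4)=0\) settles it. The sign bookkeeping works because \(\pi\) preserves the set of positions holding the digit \(7\).
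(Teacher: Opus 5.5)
Your proposal is correct and follows essentially the same route as the paper: \(\pi\) fixes \(7\), so the signs \(\sigma_r\) and scales \(d_r\) are unchanged, and linearity of \(\rho(\pi)\) together with \(\rho(\pi)v(a)=v(\pi(a))\) finishes the computation. Your extra check that \(\rho(\pi)\) is well defined, using \(v(1)+v(2)+v(4)=0\), is a harmless addition; the paper takes this as part of the definition of \(\rho(\pi)\).
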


\begin{proof}
Write \(b=b_1\cdots b_n\). Since \(\pi\) fixes the digit \(7\), the positions of the digits \(7\) in \(b\) and in \(\pi b\) are identical. Hence the same coefficients \(\sigma_r\) and the same lengths \(d_r\) occur in the formulas for \(P(b)\) and \(P(\pi b)\). Using the definition of \(\rho(\pi)\), we obtain
\[
\begin{aligned}
  P(\pi b)
  &=\sum_{r=1}^n \sigma_r d_r v(\pi(b_r))\\
  &=\sum_{r=1}^n \sigma_r d_r \rho(\pi)v(b_r)\\
  &=\rho(\pi)\sum_{r=1}^n \sigma_r d_r v(b_r)
   =\rho(\pi)P(b).
\end{aligned}
\]
\end{proof}

\begin{cor}[Reflections and transpositions]\label{cor:reflections-transpositions}
The digitwise transposition \((24)\) corresponds to reflection in the axis determined by \(1\); the transposition \((14)\) corresponds to reflection in the axis determined by \(2\); and the transposition \((12)\) corresponds to reflection in the axis determined by \(4\).
\end{cor}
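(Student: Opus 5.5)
The plan is to read the corollary off Theorem~\ref{thm:centroid-equivariance}: for each transposition \(\pi\), identify the linear symmetry \(\rho(\pi)\) explicitly. By the theorem, \(P(\pi b)=\rho(\pi)P(b)\) for every word \(b\), so the digitwise action of \(\pi\) on tiles is realized geometrically by \(\rho(\pi)\). It therefore suffices to show that \(\rho((24))\) is the reflection in the line through the origin spanned by \(v(1)\), and similarly for the other two transpositions.

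First I take \(\pi=(24)\). Let \(s_1\) be the orthogonal reflection of the plane in the line \(\mathbb R v(1)\), which is the line at angle \(210^\circ\) (equivalently \(30^\circ\)). Reflection in a line at angle \(\alpha\) sends the direction at angle \(\theta\) to the direction at angle \(2\alpha-\theta\). With \(\alpha=210^\circ\) this gives:
\begin{itemize}
\item \(v(1)\) at \(210^\circ\) goes to \(210^\circ\), so \(v(1)\) is fixed;
\item \(v(2)\) at \(90^\circ\) goes to \(330^\circ\), which is \(v(4)\);
\item \(v(4)\) at \(330^\circ\) goes to \(90^\circ\), which is \(v(2)\).
\end{itemize}
So \(s_1v(a)=v((24)(a))\) for all \(a\in\{1,2,4\}\). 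The vectors \(v(1),v(2)\) span the plane, so a linear map is determined by its values on them. The uniqueness clause in the definition of \(\rho\) then gives \(\rho((24))=s_1\). Here "the axis determined by \(1\)" is the symmetry axis of the triangle through the vertex direction \(v(1)\) and the origin.

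The same computation handles the other two cases. For \((14)\), reflect in the line at angle \(90^\circ\). This fixes \(v(2)\) and swaps \(210^\circ\leftrightarrow 330^\circ\), since \(180^\circ-210^\circ\equiv 330^\circ\) modulo \(360^\circ\). For \((12)\), reflect in the line at angle \(330^\circ\). This fixes \(v(4)\) and sends \(90^\circ\mapsto 570^\circ\equiv 210^\circ\).

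To state the conclusion at the level of the tiling, I combine this with Theorem~\ref{thm:centroid-equivariance}. The digitwise transposition then maps the centroid of each tile \(T_b\) to its mirror image in the corresponding axis. The tile orientation rule depends only on \(N_{124}\), which \(\pi\) preserves. A reflection in a vertical-type symmetry axis of the initial triangle also preserves the up/down orientation class of the subtiles. So the whole tile, not just its centroid, is mapped to its reflection.

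There is no real obstacle here; the only care needed is angle bookkeeping modulo \(360^\circ\). The only conceptual point is to invoke uniqueness in the definition of \(\rho(\pi)\), so that agreement on the three vertex directions forces \(\rho(\pi)\) to equal the reflection.
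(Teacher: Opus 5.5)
Your proposal is correct and follows the paper's proof, which simply specializes Theorem~\ref{thm:centroid-equivariance} to the three transpositions. You also write out the angle check identifying \(\rho(\pi)\) with the reflection in the axis through \(v(a)\), which the paper leaves implicit in its identification of \(D_3\) with permutations of the vertex directions. Your tile-level remark is also sound, provided you read ``vertical-type'' as covering all three axes: each axis, not only the vertical one, is a symmetry of the initial triangle, so each preserves the up/down orientation class of tiles.
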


\begin{proof}
This is the preceding theorem in the special case where \(\pi\) is a transposition.
\end{proof}

Figure~\ref{fig:i-axis-order3} illustrates the \(1\)-axis in order three: the
highlighted tiles are the words in \(\{1,7\}^3\), hence fixed by the digit
transposition \((24)\).

\begin{figure}[htbp]
\centering
\maybeincludegraphics[width=.82\textwidth]{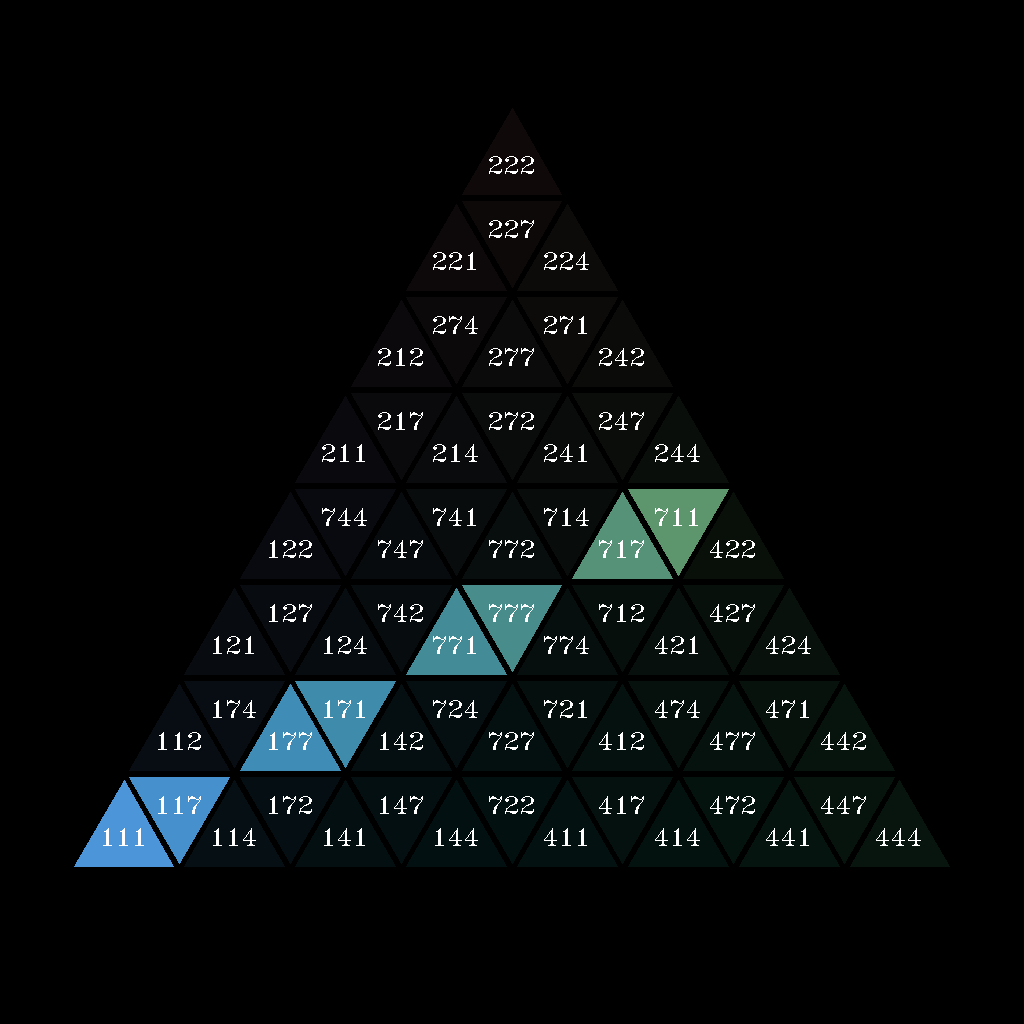}
\caption{The order-three triangular tiling with each tile labeled by its octal word.
The highlighted tiles lie on the \(1\)-axis, equivalently the \(i\)-axis; they are
the words in \(\{1,7\}^3\), fixed by the digit transposition \((24)\).}
\label{fig:i-axis-order3}
\end{figure}

\FloatBarrier

\begin{cor}[Cyclic equivariance of the centroid map]
\label{cor:cyclic-equivariance}
Let \(\cyc=(124)\), so that
\[
  1\mapsto 2,\qquad 2\mapsto 4,\qquad
  4\mapsto 1,\qquad 7\mapsto 7.
\]
Then
\[
  P(\cyc b)=R P(b),
\]
where \(R\) is the rotation of the plane sending \(v(1)\) to \(v(2)\),
\(v(2)\) to \(v(4)\), and \(v(4)\) to \(v(1)\).
\end{cor}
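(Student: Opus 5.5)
This is the special case \(\pi=\cyc=(124)\) of Theorem~\ref{thm:centroid-equivariance}. That theorem gives \(P(\cyc b)=\rho(\cyc)P(b)\), where \(\rho(\cyc)\) is the unique linear map with \(\rho(\cyc)v(a)=v(\cyc(a))\) for \(a\in\{1,2,4\}\). So we only need to identify \(\rho(\cyc)\) with the rotation \(R\) in the statement.

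First, a linear map of the plane is determined by its values on two linearly independent vectors. The vectors \(v(1)\) and \(v(2)\) are linearly independent, since their directions are \(210^\circ\) and \(90^\circ\). Hence \(\rho(\cyc)\) is the unique linear map with \(v(1)\mapsto v(2)\) and \(v(2)\mapsto v(4)\). We will also check that it sends \(v(4)\) to \(v(1)\). This follows from the relation \(v(1)+v(2)+v(4)=0\): the three unit vectors are at \(120^\circ\) spacing. Applying \(\rho(\cyc)\) to this relation gives \(\rho(\cyc)v(4)=-v(2)-v(4)=v(1)\).

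Next, let \(R\) be rotation by \(-120^\circ\). It maps direction \(210^\circ\) to \(90^\circ\), direction \(90^\circ\) to \(-30^\circ\equiv330^\circ\), and direction \(330^\circ\) to \(210^\circ\). So \(R\) sends \(v(1)\mapsto v(2)\), \(v(2)\mapsto v(4)\) and \(v(4)\mapsto v(1)\). By the uniqueness above, \(\rho(\cyc)=R\). Both maps also fix \(v(7)=0\), so the digit \(7\) causes no issue.

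There is no real obstacle here. The only step requiring care is the angle bookkeeping, namely that \(\cyc\) corresponds to a clockwise rotation by \(120^\circ\) rather than a counterclockwise one. Since the statement defines \(R\) by its action on the three vectors, the orientation is fixed by the data and does not need to be chosen.
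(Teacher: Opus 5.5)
Your proof is correct and takes the same route as the paper, which simply applies Theorem~\ref{thm:centroid-equivariance} with \(\pi=(124)\). Your extra check that \(\rho((124))\) is the rotation through \(-120^\circ\), using \(v(1)+v(2)+v(4)=0\) and the angle bookkeeping, is accurate and makes explicit an identification the paper leaves implicit.
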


\begin{proof}
This is Theorem~\ref{thm:centroid-equivariance} applied to the
three-cycle \(\cyc=(124)\).
\end{proof}

\begin{cor}[Global cyclic orbits give equilateral triangles]
\label{cor:global-cyclic-orbits}
Let \(b\in\Dn\) be a base vector different from the identity \(77\cdots 7\).
Then the three points
\[
  P(b),\qquad P(\cyc b),\qquad P(\cyc^2 b)
\]
are the vertices of an equilateral triangle.
\end{cor}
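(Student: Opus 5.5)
The plan is to combine Corollary~\ref{cor:cyclic-equivariance} with the preceding lemma on non-cancellation of the first nonzero step.

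First, set \(p=P(b)\). Applying Corollary~\ref{cor:cyclic-equivariance} once and then again to \(\cyc b\) gives
\[
  P(\cyc b)=Rp,\qquad P(\cyc^2 b)=R^2p.
\]
Here \(R\) is the rotation permuting \(v(1),v(2),v(4)\) cyclically. Since these vectors point at angles \(210^\circ,90^\circ,330^\circ\), \(R\) is the rotation about the origin by \(-120^\circ\), and \(R^3=\mathrm{id}\). This identification of \(R\) as a rotation by \(\pm120^\circ\) about the origin is immediate from the explicit angles.

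Second, the three points \(p,Rp,R^2p\) all lie on the circle of radius \(|p|\) centered at the origin. Consecutive points differ by a rotation of \(120^\circ\) about that center. Hence
\[
  |p-Rp|=|Rp-R^2p|=|R^2p-p|=2|p|\sin 60^\circ=\sqrt3\,|p|.
\]
So the three points are equidistant in pairs.

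Third, we must rule out degeneracy, meaning the three points coinciding. This happens exactly when \(p=0\), since \(R\) fixes only the origin. This is the only step requiring an input beyond equivariance, and it is supplied by the lemma on no cancellation of the first nonzero step. Because \(b\neq 77\cdots7\), that lemma gives \(P(b)\neq0\). The common side length \(\sqrt3\,|p|\) is therefore positive, and the points are the vertices of a genuine equilateral triangle centered at the origin.

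I do not expect a real obstacle. The only point needing care is the nondegeneracy, which the earlier lemma handles.
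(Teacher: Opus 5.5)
Your proposal is correct and follows the paper's proof: Corollary~\ref{cor:cyclic-equivariance} identifies the three points as \(P(b),RP(b),R^2P(b)\) with \(R\) the rotation by \(-120^\circ\), and the no-cancellation lemma gives \(P(b)\neq0\). Your explicit side length \(\sqrt3\,|P(b)|\) and the remark that equal pairwise distances rule out partial coincidence are harmless refinements of the same argument.
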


\begin{proof}
By Corollary~\ref{cor:cyclic-equivariance}, the three points are
\[
  P(b),\qquad R P(b),\qquad R^2P(b),
\]
where, with the convention of Definition~3.1, \(R\) is the rotation by
\(-120^\circ\).  By the no-cancellation lemma, \(P(b)\ne 0\).  Hence the
three points are distinct.  Since they are obtained by rotating a nonzero point
about the origin through successive angles of \(120^\circ\) in magnitude, their
mutual distances are equal.
\end{proof}

\begin{dfn}[Synchronized local cyclic action]
Let \(S\subseteq\{1,\ldots,n\}\).  Define \(\cyc_S:\Dn\to\Dn\) by applying the
cycle \(1\mapsto2\mapsto4\mapsto1\) in the coordinates belonging to \(S\), while
leaving all other coordinates fixed; the digit \(7\) is fixed in every
coordinate.  For \(S=\{r\}\) we write \(\cyc_r\).  For
\(b=b_1\cdots b_n\in\Dn\), write
\[
  \operatorname{supp}(b):=\{r\in\{1,\ldots,n\}:b_r\ne7\}
\]
for its coordinate support.
\end{dfn}

For a fixed word \(b\), selected central coordinates do not affect the local
cyclic action, since
\[
  \cyc_S^t(b)
  =
  \cyc_{S\cap\operatorname{supp}(b)}^t(b),
  \qquad t=0,1,2.
\]
Thus, with no loss of generality, whenever \(b\) is fixed we assume below that
\[
  S\subseteq\operatorname{supp}(b).
\]

To make the selection visible in examples below, selected coordinates are
shown in bold blue.  For instance, if \(b=ijekiej\) and \(S=\{2,4,5\}\), we
write
\[
  b=i\textcolor{blue!60!black}{\boldsymbol{j}}e
  \textcolor{blue!60!black}{\boldsymbol{k}}
  \textcolor{blue!60!black}{\boldsymbol{i}}ej.
\]

\begin{thm}[Local cycles and their centers]
\label{thm:local-cycles-centers}
Let \(b=b_1\cdots b_n\in\Dn\), let
\[
  \varnothing\ne S\subseteq\operatorname{supp}(b),
\]
and define
\[
  b^{(t)}:=\cyc_S^t(b),\qquad t=0,1,2.
\]
Define the symbolic center word \(c=c(b,S)\) coordinatewise by
\[
  c_r=
  \begin{cases}
    7,&r\in S,\\
    b_r,&r\notin S.
  \end{cases}
\]
Then the following statements hold.
\begin{enumerate}
\item The points
\[
  P(b^{(0)}),\qquad P(b^{(1)}),\qquad P(b^{(2)})
\]
are the vertices of a nondegenerate equilateral triangle.

\item If
\[
  \nu(b):=\#\{r:b_r\ne7\},
\]
then the cyclically ordered product is
\[
  b^{(0)}b^{(1)}b^{(2)}
  =
  (-1)^{\nu(b)}c.
\]

\item The Euclidean center of the triangle is
\[
  Q=
  \sum_{r\notin S}\sigma_r(b)d_rv(b_r).
\]

\item The symbolic center is the geometric center, that is \(P(c)=Q\), if and
only if
\[
  \#\{s\in S:s<r\}\equiv0\pmod2
  \tag{\(\ast\)}\label{eq:local-center-parity}
\]
for every \(r\notin S\) such that \(b_r\ne7\).
\end{enumerate}
\end{thm}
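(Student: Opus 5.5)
The plan is to handle the four items separately, using the centroid formula directly for the geometric items and the local multiplication table for the algebraic one.

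\emph{Geometric decomposition.} Since \(\cyc_S\) fixes the digit \(7\) and \(S\subseteq\operatorname{supp}(b)\), the positions of the digits \(7\) are the same in \(b^{(0)},b^{(1)},b^{(2)}\). Hence the signs \(\sigma_r\) and lengths \(d_r\) coincide for all three words. This gives the decomposition
\[
  P(b^{(t)})=Q+R^t W,\qquad W:=\sum_{r\in S}\sigma_r d_r v(b_r),
\]
where \(R\) is the rotation of Corollary~\ref{cor:cyclic-equivariance}. This uses \(v(\cyc a)=Rv(a)\) for \(a\in\{1,2,4\}\), which is the defining property of \(\rho(\cyc)\).

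\emph{Item (1).} The three points are then a translate by \(Q\) of \(W,RW,R^2W\). They form an equilateral triangle centered at \(Q\) provided \(W\neq0\). Nonvanishing of \(W\) is the no-cancellation argument of the earlier lemma applied to the subsum over \(S\). Let \(m=\min S\). The step at \(m\) has length \(d_m\), while the remaining steps over \(S\) total at most \(\sum_{r>m}d_r<d_m\). Item (3) is immediate from the same decomposition, since the centroid of \(Q+W,\,Q+RW,\,Q+R^2W\) is \(Q\) because \(W+RW+R^2W=0\).

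\emph{Item (2).} By the digitwise definition, the triple product is the product over positions of the local triple products, with signs collected. Outside \(S\) the local triple is \(b_rb_rb_r\). This equals \(b_r^3=-b_r\) when \(b_r\neq7\), and \(e\) when \(b_r=7\). Inside \(S\) the local triple is \(a\,\cyc(a)\,\cyc^2(a)\), which is one of \(ijk\), \(jki\), \(kij\); each of these equals \(-e\) by Table~\ref{tab:qmult}. So each position in \(\operatorname{supp}(b)\) contributes a factor \(-1\), and the resulting digit is \(7\) on \(S\) and \(b_r\) off \(S\). The total sign is therefore \((-1)^{\nu(b)}\) and the word is \(c\).

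\emph{Item (4).} I expect this to be the main obstacle, because it requires careful bookkeeping of the signs \(\sigma_r\). In \(c\) the positions in \(S\) become \(7\), so each later displacement acquires an extra factor
\[
  (-1)^{\#\{s\in S:s<r\}}.
\]
Thus \(P(c)=\sum_{r\notin S,\,b_r\neq7}\varepsilon_r\sigma_r(b)d_rv(b_r)\), with \(\varepsilon_r=\pm1\) given by this parity. Then
\[
  P(c)-Q=-2\sum_{r\in T}\sigma_r d_r v(b_r),
\]
where \(T\) is the set of \(r\notin S\) with \(b_r\neq7\) and odd parity. Sufficiency of \((\ast)\) is clear, since \((\ast)\) says exactly that \(T=\varnothing\). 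For necessity, suppose \(T\neq\varnothing\). The no-cancellation argument applied to the sum over \(T\) shows that it is nonzero: its first step has length \(d_{\min T}\), which strictly exceeds the total length of the later steps. Hence \(P(c)\neq Q\).
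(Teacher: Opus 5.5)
Your proposal is correct and follows essentially the same route as the paper: items (1) and (3) come from the decomposition \(P(b^{(t)})=Q+R^tW\), the no-cancellation bound for \(W\ne0\), and \(I+R+R^2=0\); item (2) from the coordinatewise triple products \(b_r^3=-b_r\) and \(a\,\cyc(a)\,\cyc^2(a)=-e\); and item (4) from the sign shift \(\sigma_r(c)=(-1)^{\#\{s\in S:s<r\}}\sigma_r(b)\) followed by no-cancellation applied to \(P(c)-Q\). The only cosmetic difference is that you apply the length comparison to the sum over \(T\) before the factor \(-2\), whereas the paper keeps the factor \(2\) in the comparison.
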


\begin{proof}
Because \(\cyc\) fixes \(7\), the three words
\(b^{(0)},b^{(1)},b^{(2)}\) have the same \(7\)-support.  Consequently,
\[
  \sigma_r(b^{(0)})=\sigma_r(b^{(1)})
  =\sigma_r(b^{(2)})
\]
for every \(r\).  Write this common value as \(\sigma_r(b)\).

Let \(R=\rho((124))\).  With the convention of Definition~3.1, this is the
planar rotation through \(-120^\circ\), satisfying
\[
  Rv(1)=v(2),\qquad Rv(2)=v(4),\qquad Rv(4)=v(1).
\]
Split the centroid sum into coordinates outside and inside \(S\):
\[
  Q=
  \sum_{r\notin S}\sigma_r(b)d_rv(b_r),
  \qquad
  W=
  \sum_{r\in S}\sigma_r(b)d_rv(b_r).
\]
The coordinates outside \(S\) remain unchanged, while each nonzero vector in
the second sum is rotated by \(R\).  Hence
\[
  P(b^{(t)})=Q+R^tW,\qquad t=0,1,2.
\]
To see that \(W\ne0\), let \(m\) be the first coordinate in \(S\).  The
summand at \(m\) has length \(d_m\), while the sum of the lengths of all later
selected summands is strictly smaller than \(d_m\).  Thus the first nonzero
selected summand cannot be cancelled by the tail.

The three points are therefore a translation of the orbit of a nonzero vector
under a rotation of magnitude \(120^\circ\), and hence form a nondegenerate equilateral
triangle.  Moreover,
\[
  I+R+R^2=0,
\]
so their Euclidean center is
\[
  \frac13\sum_{t=0}^2P(b^{(t)})
  =
  Q+\frac13(I+R+R^2)W
  =
  Q.
\]
This proves parts \(1\) and \(3\).

For part \(2\), compute the ordered product coordinatewise.  Fix \(r\).  If
\(b_r=7\), the three local factors are \(e,e,e\), whose product is
\(e=c_r\).

Suppose that \(b_r\ne7\).  If \(r\notin S\), the three local factors are all
\(b_r\), and
\[
  b_r^3=-b_r=-c_r.
\]
If \(r\in S\), the factors occur in quaternionic cyclic order:
\[
  b_r,\qquad \cyc(b_r),\qquad \cyc^2(b_r).
\]
For every \(a\in\{i,j,k\}\),
\[
  a\,\cyc(a)\,\cyc^2(a)=-e=-c_r.
\]
Thus every noncentral coordinate of \(b\) contributes exactly one negative
sign, while every central coordinate contributes a positive sign.  Collecting
the coordinate signs gives
\[
  b^{(0)}b^{(1)}b^{(2)}
  =
  (-1)^{\nu(b)}c.
\]

It remains to prove part \(4\).  For \(r\notin S\), set
\[
  a_r:=\#\{s\in S:s<r\}.
\]
Passing from \(b\) to \(c\) changes each selected noncentral digit into \(7\).
Each such replacement introduces one additional orientation reversal for all
later coordinates.  Therefore
\[
  \sigma_r(c)=(-1)^{a_r}\sigma_r(b).
\]
Since every coordinate in \(S\) contributes zero to \(P(c)\),
\[
  P(c)
  =
  \sum_{r\notin S}
  (-1)^{a_r}\sigma_r(b)d_rv(b_r).
\]
Subtracting the expression for \(Q\) yields
\[
  P(c)-Q
  =
  -2
  \sum_{\substack{r\notin S,\ b_r\ne7\\a_r\ {\rm odd}}}
  \sigma_r(b)d_rv(b_r).
  \tag{\(\dagger\)}\label{eq:local-center-difference}
\]
If every relevant \(a_r\) is even, the sum is empty and \(P(c)=Q\).

Conversely, suppose that the sum in
\eqref{eq:local-center-difference} is nonempty, and let \(m\) be its first
index.  Its first term has length \(2d_m\), whereas the total length of all
later terms is strictly less than
\[
  2\sum_{r>m}d_r<2d_m.
\]
The first term cannot be cancelled by the tail.  Hence \(P(c)-Q\ne0\).
This proves the equivalence with \eqref{eq:local-center-parity}.
\end{proof}

See \cite{DementEquilateral2026} for a systematic study of equilateral
completion in the present centroid coordinates, where
Theorem~\ref{thm:local-cycles-centers} plays a central role.

\begin{rem}[Global cycles as local cycles]
For a fixed word \(b\), the global cyclic orbit is represented canonically by
the synchronized local cyclic action with
\[
  S=\operatorname{supp}(b),
\]
since
\[
  \cyc_{\{1,\ldots,n\}}^t(b)
  =
  \cyc_{\operatorname{supp}(b)}^t(b),
  \qquad t=0,1,2.
\]
Thus Theorem~\ref{thm:local-cycles-centers} also gives a second proof of
Corollary~\ref{cor:global-cyclic-orbits}.
\end{rem}

\begin{rem}[Orbit order]
The equilateral triangle itself is an unordered set of three vertices, but the
product formula retains the cyclic order
\[
  b,\qquad \cyc_Sb,\qquad \cyc_S^2b.
\]
Reversing the local cycle changes the local sign in each selected
noncentral coordinate.  Hence the total product sign changes exactly when
\[
  |S|
\]
is odd.
\end{rem}

\begin{cor}[Terminal selections]\label{cor:local-terminal-block}
Suppose that
\[
  S=\operatorname{supp}(b)\cap\{m,m+1,\ldots,n\}
  \ne\varnothing.
\]
Then the symbolic center \(c(b,S)\) is the Euclidean center of the local cyclic
triangle.
\end{cor}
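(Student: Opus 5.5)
The plan is to apply part \(4\) of Theorem~\ref{thm:local-cycles-centers}. It says that \(P(c)=Q\) exactly when the parity condition \eqref{eq:local-center-parity} holds for every unselected noncentral coordinate. So it suffices to check that condition for the terminal selection
\[
  S=\operatorname{supp}(b)\cap\{m,\ldots,n\}.
\]
First, \(S\) is a nonempty subset of \(\operatorname{supp}(b)\) by hypothesis. Hence the theorem applies, and by parts \(1\) and \(3\) the local cyclic triangle is nondegenerate with Euclidean center \(Q\).

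Next, identify the coordinates \(r\) to which \eqref{eq:local-center-parity} refers, namely those with \(r\notin S\) and \(b_r\ne7\). Such an \(r\) lies in \(\operatorname{supp}(b)\) but not in \(S\). Since every support coordinate with index at least \(m\) belongs to \(S\), this forces \(r<m\).

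For such \(r\), every \(s\in S\) satisfies \(s\ge m>r\). So \(\#\{s\in S:s<r\}=0\), which is even. Thus \eqref{eq:local-center-parity} holds for every relevant \(r\), and part \(4\) gives \(P(c)=Q\).

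There is no real obstacle here. The only point needing care is that membership in \(S\) and in the tail interval coincide on the support. Central coordinates \(r\ge m\) lie outside \(S\), but they have \(b_r=7\) and so are excluded from the condition. Equivalently, one could note directly that the sum in \eqref{eq:local-center-difference} is empty.
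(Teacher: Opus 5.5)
Your proof is correct and is essentially the paper's argument. Every unselected noncentral coordinate lies before $m$, hence before every element of $S$, so the count in condition $(\ast)$ is zero and part~4 of Theorem~\ref{thm:local-cycles-centers} gives $P(c)=Q$.
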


\begin{proof}
Every noncentral coordinate outside \(S\) occurs before every element of \(S\).
Thus the number in \eqref{eq:local-center-parity} is zero for each relevant
coordinate.
\end{proof}

\begin{exa}[The same word with two local selections]
Let
\[
  b=ijekiej,
  \qquad
  \operatorname{supp}(b)=\{1,2,4,5,7\}.
\]
Two choices of \(S\) illustrate both outcomes in
Theorem~\ref{thm:local-cycles-centers}(4).

First take \(S_1=\{2,4\}\).  Its local orbit is
\[
\begin{array}{c@{\qquad}c}
  t=0 & i\textcolor{blue!60!black}{\boldsymbol{j}}e
        \textcolor{blue!60!black}{\boldsymbol{k}}iej\\
  t=1 & i\textcolor{blue!60!black}{\boldsymbol{k}}e
        \textcolor{blue!60!black}{\boldsymbol{i}}iej\\
  t=2 & i\textcolor{blue!60!black}{\boldsymbol{i}}e
        \textcolor{blue!60!black}{\boldsymbol{j}}iej
\end{array}
\qquad
\bigl(S_1=\{2,4\}\bigr).
\]
The symbolic center is
\[
  c_1=c(b,S_1)=ieeeiej.
\]
The unselected noncentral coordinates are \(r=1,5,7\), preceded by
\(0,2,2\) selected coordinates, respectively.  All three counts are even.
Hence
\[
  P(c_1)=Q_1,
\]
so the symbolic center is the Euclidean center.  Since \(\nu(b)=5\), the
ordered vertex product is \(-c_1\).

Now take \(S_2=\{2,4,5\}\).  The local orbit is
\[
\begin{array}{c@{\qquad}c}
  t=0 & i\textcolor{blue!60!black}{\boldsymbol{j}}e
        \textcolor{blue!60!black}{\boldsymbol{k}}
        \textcolor{blue!60!black}{\boldsymbol{i}}ej\\
  t=1 & i\textcolor{blue!60!black}{\boldsymbol{k}}e
        \textcolor{blue!60!black}{\boldsymbol{i}}
        \textcolor{blue!60!black}{\boldsymbol{j}}ej\\
  t=2 & i\textcolor{blue!60!black}{\boldsymbol{i}}e
        \textcolor{blue!60!black}{\boldsymbol{j}}
        \textcolor{blue!60!black}{\boldsymbol{k}}ej
\end{array}
\qquad
\bigl(S_2=\{2,4,5\}\bigr).
\]
Here
\[
  c_2=c(b,S_2)=ieeeeej.
\]
The final unselected noncentral coordinate \(r=7\) has three selected
coordinates before it.  Thus condition
\eqref{eq:local-center-parity} fails and
\[
  P(c_2)\ne Q_2.
\]
Again the ordered vertex product is \(-c_2\).  Thus the same base word can
produce either outcome: the distinction is controlled entirely by the parity
of the selected coordinates preceding each unselected noncentral coordinate.
\end{exa}

\section{Reflection anti-automorphisms and axis-landing criteria}

The geometric reflections of Corollary~\ref{cor:reflections-transpositions}
also act naturally on the algebra.  The parity of the corresponding digit
permutation determines whether multiplication order is preserved or reversed,
and this leads to the axis-landing criteria below.

\begin{prop}[Reflections reverse multiplication order]
We use the same notation for a digitwise permutation of \(\{1,2,4,7\}\) and
for its linear extension to \(\An\).  If \(\pi\in S_3\) is even, then
\[
  \pi(XY)=\pi(X)\pi(Y)\qquad\text{for all }X,Y\in\An.
\]
If \(\pi\in S_3\) is odd, then
\[
  \pi(XY)=\pi(Y)\pi(X)\qquad\text{for all }X,Y\in\An.
\]
\end{prop}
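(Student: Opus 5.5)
By bilinearity it suffices to treat basis words \(X=b\), \(Y=c\in\Dn\). Since \(\pi\) fixes \(7\) and acts coordinatewise, both \(\pi(bc)\) and \(\pi(b)\pi(c)\) (respectively \(\pi(c)\pi(b)\)) are digitwise products whose signs are the product of the local signs. The statement therefore reduces to order one: we must show that for all \(x,y\in\{i,j,k,e\}\),
\[
  \pi(xy)=\pi(x)\pi(y)\ (\pi\text{ even}),\qquad \pi(xy)=\pi(y)\pi(x)\ (\pi\text{ odd}),
\]
where \(\pi\) is extended linearly, so \(\pi(-z)=-\pi(z)\). Once the local identity holds, the order-\(n\) identity follows by multiplying the local signs, exactly as in the proof of the bitwise rule in every order. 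For the odd case we also use that the reversed order-\(n\) product \(\pi(c)\pi(b)\) is computed coordinatewise as the product of \(\pi(c_r)\pi(b_r)\); this is immediate from the definition of digitwise multiplication.

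For the local identity, the cases with \(x=e\) or \(y=e\) are trivial because \(\pi(e)=e\). The case \(x=y\in\{i,j,k\}\) gives \(\pi(-e)=-e=\pi(x)^2\), which holds in both orders. For \(x\ne y\) in \(\{i,j,k\}\), the table gives \(xy=\varepsilon z\), where \(z\) is the third symbol and \(\varepsilon=+1\) exactly when \((x,y,z)\) is in the cyclic order \((i,j,k)\). Then \(\pi(xy)=\varepsilon\,\pi(z)\), and \(\pi(z)\) is the third symbol for \(\pi(x),\pi(y)\). An even permutation of \(\{i,j,k\}\) is a power of \(\cyc\), which preserves cyclic order, so \(\pi(x)\pi(y)=\varepsilon\pi(z)\). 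An odd permutation reverses cyclic order, so \(\pi(x)\pi(y)=-\varepsilon\pi(z)=\pi(y)\pi(x)\), using anticommutativity of distinct imaginary units.

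Finally, since \(S_3\) is generated by \(\cyc\) and one transposition, it is enough to check \(\cyc\) and, say, \((24)\). This shortcut uses that a composite of two automorphisms is an automorphism, that an automorphism composed with an anti-automorphism is an anti-automorphism, and that two anti-automorphisms compose to an automorphism; parity then matches. The only point needing care is the sign bookkeeping, namely that cyclic order is preserved or reversed according to parity. This is the one genuine step, and it is settled by the argument above.
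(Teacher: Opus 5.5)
Your proof is correct and follows the paper's route: reduce by bilinearity to basis words, then reduce to order one because multiplication is digitwise with the local signs collected, then verify the local quaternion table. Your cyclic-orientation argument (even permutations preserve the orientation \((i,j,k)\), odd ones reverse it, and distinct imaginary units anticommute) makes explicit what the paper leaves as ``the remaining products are checked in the same local table,'' and your closing paragraph on generators is harmless but redundant, since the local check already handles every \(\pi\in S_3\) directly.
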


\begin{proof}
It is enough to check the statement on basis words.  In order one, the cycle
\((124)\), equivalently \(i\mapsto j\mapsto k\mapsto i\), preserves the
orientation of the quaternionic multiplication table and is an automorphism.
For example, \(ij=k\) is sent to \(jk=i\).  On the other hand, a transposition
of two of \(i,j,k\) reverses the orientation of the table and gives an
anti-automorphism.  For instance, the transposition fixing \(i\) and swapping
\(j\) and \(k\) sends \(ij=k\) to \(ik=-j\), while reversing the order gives
\(ki=j\), so the equality is restored in the form
\(\tau(xy)=\tau(y)\tau(x)\).  The remaining products are checked in the same
local table.  Applying this coordinate by coordinate and collecting the signs
gives the assertion for basis words of any order, and bilinearity gives it for
all of \(\An\).
\end{proof}

\begin{prop}[Axis-landing criterion]
For each \(a\in\{1,2,4\}\), let \(\tau_a\) denote the transposition of
\(\{1,2,4\}\) which fixes \(a\) and swaps the other two digits; geometrically,
\(\tau_a\) is the reflection in the axis determined by \(a\).  Suppose that
\(X,Y\in\An\) satisfy
\[
  \tau_p(X)=X,\qquad \tau_q(Y)=Y
\]
for some \(p,q\in\{1,2,4\}\).  Then, for any \(r\in\{1,2,4\}\), the product
\(XY\) is symmetric with respect to the \(r\)-axis, that is
\[
  \tau_r(XY)=XY,
\]
if and only if
\[
  XY=(\tau_r\tau_q)(Y)\,(\tau_r\tau_p)(X).
\]
\end{prop}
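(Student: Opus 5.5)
The plan is to compute \(\tau_r(XY)\) directly with the preceding proposition, then use that \(\tau_p\) and \(\tau_q\) fix \(X\) and \(Y\), and finally use that \(\tau_r\) is an involution. Since \(\tau_r\) is a transposition, hence odd, the preceding proposition gives
\[
  \tau_r(XY)=\tau_r(Y)\,\tau_r(X).
\]
By hypothesis \(X=\tau_p(X)\) and \(Y=\tau_q(Y)\). The digitwise action is an action of \(S_3\), and its linear extension to \(\An\) respects composition, so \(\tau_r(X)=\tau_r(\tau_p(X))=(\tau_r\tau_p)(X)\). In the same way \(\tau_r(Y)=(\tau_r\tau_q)(Y)\). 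Therefore
\[
  \tau_r(XY)=(\tau_r\tau_q)(Y)\,(\tau_r\tau_p)(X)
\]
holds identically, with no assumption on \(r\).

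The equivalence follows at once: \(\tau_r(XY)=XY\) holds if and only if \(XY\) equals the right-hand side above, which is exactly the stated criterion.

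The only point needing care is that composition of digitwise permutations, extended linearly to \(\An\), agrees with the composite permutation \(\tau_r\tau_p\) in \(S_3\). On basis words this is immediate, because permutations act digitwise with no signs. Linearity then extends it to all of \(\An\).

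As a remark, in the same-axis case \(p=q=r\) we have \(\tau_r\tau_p=\mathrm{id}\), and the criterion reduces to \(XY=YX\). This is the ordinary commutation statement mentioned in the introduction.
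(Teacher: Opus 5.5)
Your proposal is correct and follows essentially the same argument as the paper: apply the anti-automorphism property of the odd permutation \(\tau_r\), rewrite \(\tau_r(Y)=(\tau_r\tau_q)(Y)\) and \(\tau_r(X)=(\tau_r\tau_p)(X)\) using the fixed-point hypotheses, and then impose \(\tau_r(XY)=XY\). Your extra check that the linearly extended digitwise action respects composition is a harmless justification that the paper leaves implicit. The involution property of \(\tau_r\) that you announce in your plan is never actually used, and the argument does not need it.
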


\begin{proof}
Since \(\tau_r\) is odd, the preceding proposition gives
\[
  \tau_r(XY)=\tau_r(Y)\tau_r(X).
\]
Because \(Y\) is fixed by \(\tau_q\) and \(X\) is fixed by \(\tau_p\), we have
\[
  \tau_r(Y)=\tau_r\tau_q(Y),\qquad
  \tau_r(X)=\tau_r\tau_p(X).
\]
Therefore
\[
  \tau_r(XY)=(\tau_r\tau_q)(Y)\,(\tau_r\tau_p)(X),
\]
and the criterion follows by imposing \(\tau_r(XY)=XY\).
\end{proof}

\begin{cor}[Products landing on the original axes]
Suppose that \(X,Y\in\An\) satisfy
\[
  \tau_p(X)=X,\qquad \tau_q(Y)=Y
\]
for some \(p,q\in\{1,2,4\}\), and put
\[
  \gamma_{pq}:=\tau_p\tau_q\in S_3 .
\]
Then \(XY\) is symmetric with respect to the \(p\)-axis if and only if
\[
  XY=\gamma_{pq}(Y)\,X .
\]
Similarly, \(XY\) is symmetric with respect to the \(q\)-axis if and only if
\[
  XY=Y\,\gamma_{qp}(X).
\]
\end{cor}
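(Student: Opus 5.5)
This corollary is a direct specialization of the axis-landing criterion, taking \(r=p\) and then \(r=q\). I will use only two facts: \(\tau_p^2=\mathrm{id}\), and \(X,Y\) are fixed by \(\tau_p,\tau_q\) respectively.

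For the \(p\)-axis statement, set \(r=p\) in the axis-landing criterion. It states that \(\tau_p(XY)=XY\) if and only if
\[
  XY=(\tau_p\tau_q)(Y)\,(\tau_p\tau_p)(X).
\]
Since \(\tau_p\) is a transposition, \(\tau_p\tau_p=\mathrm{id}\), so the second factor is just \(X\). The first factor is \(\gamma_{pq}(Y)\) by the definition \(\gamma_{pq}=\tau_p\tau_q\). This gives the condition \(XY=\gamma_{pq}(Y)\,X\).

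For the \(q\)-axis statement, set \(r=q\). The criterion then reads
\[
  XY=(\tau_q\tau_q)(Y)\,(\tau_q\tau_p)(X)=Y\,\gamma_{qp}(X),
\]
using \(\tau_q^2=\mathrm{id}\) and \(\gamma_{qp}=\tau_q\tau_p\).

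Only two small points need checking. First, the composition convention: \(\tau_r\tau_q\) means apply \(\tau_q\) first. This matches the proof of the criterion, where \(\tau_r(Y)=\tau_r(\tau_q(Y))\). Second, \(\gamma_{pq}\) is a three-cycle when \(p\ne q\) and the identity when \(p=q\). In the case \(p=q\) the \(p\)-axis condition reduces to \(XY=YX\), which is the same-axis commutation statement mentioned in the introduction. I do not expect any real obstacle; the argument is pure substitution.
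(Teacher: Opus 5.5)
Your proposal is correct and is exactly the paper's argument: specialize the axis-landing criterion to \(r=p\) and then to \(r=q\), using \(\tau_p^2=\tau_q^2=\mathrm{id}\) to simplify the factors. Your remarks on the composition convention and on the case \(p=q\) are accurate but not needed for the proof.
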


\begin{proof}
Apply the preceding proposition first with \(r=p\), and then with \(r=q\).
\end{proof}

\begin{cor}[Products of elements symmetric about the same axis]
Let \(\tau\) be one of the three transpositions in \(S_3\), and suppose that
\(X,Y\in\An\) are symmetric with respect to the corresponding axis, that is
\[
  \tau(X)=X,\qquad \tau(Y)=Y.
\]
Then
\[
  \tau(XY)=YX.
\]
Consequently \(XY\) is symmetric with respect to the same axis if and only if
\(XY=YX\).
\end{cor}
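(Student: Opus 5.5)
The plan is to reduce to the preceding proposition that odd digit permutations act as anti-automorphisms, in the same way the axis-landing criterion was derived.

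First, \(\tau\) is a transposition, hence odd in \(S_3\). By the proposition on reflections reversing multiplication order, applied to \(X,Y\in\An\),
\[
  \tau(XY)=\tau(Y)\,\tau(X).
\]
Substituting the hypotheses \(\tau(X)=X\) and \(\tau(Y)=Y\) gives \(\tau(XY)=YX\). This is the first assertion.

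For the equivalence, note that \(XY\) is symmetric about the axis of \(\tau\) exactly when \(\tau(XY)=XY\). By the first assertion this is the same as \(YX=XY\).

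Alternatively, the result is the special case \(p=q=r\) of the axis-landing criterion. There \(\tau_r\tau_q=\tau_r\tau_p=\tau^2\) is the identity, so the criterion reads \(XY=YX\). I would present the direct two-line argument and mention this specialization as a check.

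There is no real obstacle. The only point needing care is that the anti-automorphism property holds on all of \(\An\), not just on basis words, and the earlier proposition already gives this by bilinearity.
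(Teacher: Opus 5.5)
Your argument is correct and essentially matches the paper's, which specializes the preceding corollary on products landing on the original axes to \(p=q\), where \(\gamma_{pp}=\tau_p^2=\mathrm{id}\); that is the same computation \(\tau(XY)=\tau(Y)\tau(X)=YX\) you carry out directly from the anti-automorphism property. Your direct version has the minor advantage of stating the first assertion \(\tau(XY)=YX\) explicitly, whereas the paper's one-line citation leaves it implicit in the proof of the axis-landing proposition.
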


\begin{proof}
This is the preceding corollary in the case \(p=q\), where
\(\gamma_{pp}=\tau_p^2=\mathrm{id}\).
\end{proof}

\begin{rem}[Products landing on a different axis]
The same-axis criterion does not say that a noncommuting product has no
reflection symmetry.  It says only that the product does not remain symmetric
about the original axis.  If
\[
  \tau_a(X)=X,\qquad \tau_a(Y)=Y,
\]
then \(XY\) is symmetric with respect to the \(r\)-axis precisely when
\[
  XY=(\tau_r\tau_a)(Y)\,(\tau_r\tau_a)(X).
\]
For \(r\ne a\), the permutation \(\tau_r\tau_a\) is a nontrivial cyclic
permutation of the three noncentral digits.  Thus a product of two elements
symmetric about the same axis may land on a different reflection axis when a
cyclically rotated reversal identity holds.  In general, however, such a product
need not be symmetric about any triangular axis.
\end{rem}

\begin{cor}[Powers preserve reflection symmetry]
Let \(\tau\) be one of the three transpositions in \(S_3\).  If
\(X\in\An\) is symmetric with respect to the corresponding triangular axis,
that is,
\[
  \tau(X)=X,
\]
then
\[
  \tau(X^m)=X^m
\]
for every \(m\geq 1\).  In particular \(X^2\) is symmetric with respect to
the same axis.  Consequently, if
\[
  X^m=\sum_{b\in\Dn} a_b(m)b,
\]
then
\[
  a_b(m)=a_{\tau(b)}(m)
\]
for every \(b\in\Dn\) and every \(m\geq 1\).
\end{cor}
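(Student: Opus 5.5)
The plan is induction on \(m\), using the same-axis corollary (for \(\tau(X)=X\) and \(\tau(Y)=Y\), one has \(\tau(XY)=YX\)) together with the fact that any two powers of \(X\) commute. The case \(m=1\) is the hypothesis. For the inductive step, assume \(\tau(X^{m})=X^{m}\) and apply the corollary with \(Y=X^{m}\). This gives
\[
  \tau(X^{m+1})=\tau(X\cdot X^{m})=X^{m}X=X^{m+1},
\]
since \(X^{m}X=X^{m+1}=XX^{m}\) by associativity of \(\An\). Associativity holds because \(\An\cong\mathbb H^{\otimes n}\), or directly because digitwise multiplication with collected signs is associative coordinate by coordinate. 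The case \(m=2\) is simply the first instance, \(\tau(X^2)=X\cdot X\).

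Alternatively, one can argue without induction. By the proposition that odd permutations act as anti-automorphisms, \(\tau(X_1\cdots X_m)=\tau(X_m)\cdots\tau(X_1)\). With every \(X_i=X\) and \(\tau(X)=X\), this gives \(\tau(X^m)=X^m\) at once. I would state this as the main line, and use the induction only to make the use of the two-factor statement explicit.

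For the coefficient statement, write \(X^m=\sum_b a_b(m)\,b\). Since \(\tau\) acts linearly by permuting base vectors without signs (digitwise, fixing \(7\)), we have \(\tau(X^m)=\sum_b a_b(m)\,\tau(b)=\sum_b a_{\tau^{-1}(b)}(m)\,b\). Comparing coefficients in the basis \(\Dn\) gives \(a_{\tau^{-1}(b)}(m)=a_b(m)\), and \(\tau^{-1}=\tau\) because \(\tau\) is a transposition.

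No step here is a real obstacle. The only point to make explicit is that \(\tau\) permutes the basis words without introducing signs, since it is defined as a digitwise relabeling. This is what makes the coefficient comparison legitimate.
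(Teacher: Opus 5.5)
Your main line, which applies the anti-automorphism property with \(X_1=\cdots=X_m=X\) and then compares the coefficients of \(b\) and \(\tau(b)\) in the basis \(\Dn\), is exactly the paper's proof, and your remark that \(\tau\) permutes base words without introducing signs is the right justification for the coefficient step. The inductive variant via the same-axis corollary is also correct, but it adds nothing new, since that corollary is itself derived from the same anti-automorphism property.
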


\begin{proof}
Since \(\tau\) is odd, it is an anti-automorphism.  Thus, for any
\(X_1,\ldots,X_m\in\An\),
\[
  \tau(X_1\cdots X_m)=\tau(X_m)\cdots\tau(X_1).
\]
Taking \(X_1=\cdots=X_m=X\) gives
\[
  \tau(X^m)=\tau(X)^m=X^m,
\]
because \(\tau(X)=X\).  Comparing coefficients of \(b\) and \(\tau(b)\) gives the
last assertion.
\end{proof}

\begin{rem}[Exponentials]
Since \(\An\) is finite-dimensional, the usual exponential series
\[
  \exp(X)=\sum_{m\geq 0}\frac{X^m}{m!}
\]
is defined for every \(X\in\An\).  The preceding corollary gives
\[
  \tau(X)=X\quad\Longrightarrow\quad \tau(\exp(X))=\exp(X).
\]
Thus reflection symmetry is preserved not only by powers, but also by the
exponential generating function of those powers.  In particular, the
one-parameter family \(\exp(tX)\) remains on the same reflection axis whenever
\(X\) does.
\end{rem}

\begin{rem}
This gives a direct link between reflection symmetry and centralizers.  In the
same-axis case, preservation of reflection symmetry under multiplication is
controlled by the ordinary centralizer.  In the different-axis case one obtains
a rotated or twisted centralizer condition, such as
\[
  XY=\gamma_{pq}(Y)X.
\]
Thus the geometry of triangular axes detects both ordinary and twisted forms of
commutation in the algebra.

The same observation also explains the behavior of inverses.  If \(X\) is
invertible, then any digit permutation or reflection sends \(X^{-1}\) to the
inverse of the transformed element.  In particular, if \(X\) is symmetric about
a triangular axis and \(X^{-1}\) exists, then \(X^{-1}\) is symmetric about the
same axis.  The symmetric units for a fixed axis are therefore closed under
inversion.  Products remain symmetric about the same axis only under the
commutation condition above, although the axis-landing criterion shows that a
noncommuting product can sometimes land on one of the other two axes.
\end{rem}

\begin{exa}[The finite Sierpi\'nski support as a symmetry test]
Let
\[
  S_E(n):=\sum_{b\in\{1,2,4\}^n} b\in\An .
\]
This is the finite corner-only support obtained by excluding the central digit
\(7\), equivalently by excluding the quaternionic unit \(e\).  It is the
order-\(n\) finite Sierpi\'nski-type support in the triangular tiling
\cite{Sierpinski,Hutchinson,Mandelbrot,Falconer}.

The element \(S_E(n)\) is fixed by every digitwise permutation in \(S_3\), and
in particular by every reflection \(\tau\).  Therefore, if \(X\in\An\) is
symmetric about the corresponding triangular axis, so that \(\tau(X)=X\), then
the anti-automorphism property gives
\[
  \tau\!\left(XS_E(n)\right)=\tau(S_E(n))\tau(X)=S_E(n)X .
\]
Consequently
\[
  XS_E(n)\ \text{is symmetric about the same axis}
  \quad\Longleftrightarrow\quad
  XS_E(n)=S_E(n)X .
\]
Thus the finite Sierpi\'nski support gives a concrete test case for the preceding
centralizer criterion: multiplying a symmetric element by \(S_E(n)\) preserves
the same reflection symmetry exactly when \(X\) centralizes \(S_E(n)\).
\end{exa}

\section{Centralizer tiles}

In this section we distinguish carefully between the centralizer in the
signed group \(\Fn\) and the corresponding set of positive tiles in \(\Dn\).
The preceding sections show why centralizers are geometrically natural: for two
elements symmetric about the same triangular axis, commutation is exactly the
condition that their product preserve that reflection symmetry.  We first count
the centralizer tile sets and identify a cyclic-orbit structure in their signed
components, then record the related parity projection and signed component
identities, and finally give a parity-dependent family in which products land
on another axis, on no triangular axis, or on a larger symmetry class.

\subsection{Centralizers and positive tile sets}

\begin{dfn}[Centralizer in the signed group]
For \(g\in\Fn\), set
\[
  C_{\Fn}(g):=\{h\in\Fn:hg=gh\}.
\]
\end{dfn}

\begin{lemma}[Conjugacy classes]
If \(b\in\Dn\) and \(b\ne e_n\), then the conjugacy class of \(b\) in \(\Fn\) is
\[
  \{b,-b\}.
\]
\end{lemma}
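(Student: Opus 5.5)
To prove the lemma we show two inclusions: the conjugacy class of \(b\) is contained in \(\{b,-b\}\), and \(-b\) actually occurs as a conjugate.

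For the first inclusion, we use that any two base words either commute or anticommute. In order one, two elements of \(\{i,j,k,e\}\) satisfy \(xy=\pm yx\). The sign is \(-1\) exactly when \(x\) and \(y\) are distinct and both lie in \(\{i,j,k\}\). This follows from Table~\ref{tab:qmult} or Proposition~\ref{prop:local-quaternion}. Since multiplication in order \(n\) is digitwise with signs collected (Theorem on the bitwise rule in every order), for \(b,c\in\Dn\) we get
\[
  cb=(-1)^{\kappa(b,c)}\,bc,\qquad
  \kappa(b,c)=\#\{r:\ b_r,c_r\in\{1,2,4\},\ b_r\ne c_r\}.
\]
Every element of \(\Fn\) has the form \(\pm c\), and \(\pm1\) are central, so conjugation by \(\pm c\) equals conjugation by \(c\). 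Each \(c\) is invertible: \(c^2=\pm e_n\), since every local square is \(\pm e\). Hence
\[
  cbc^{-1}=(-1)^{\kappa(b,c)}\,b\in\{b,-b\}.
\]

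For the second inclusion, we use \(b\ne e_n\). Choose a position \(r\) with \(b_r\in\{1,2,4\}\). Let \(c\) be the word with \(c_s=7\) for \(s\ne r\), and \(c_r\) any digit of \(\{1,2,4\}\) different from \(b_r\). Then \(\kappa(b,c)=1\), so \(cbc^{-1}=-b\). Together with the trivial conjugate \(b\) (conjugate by \(e_n\)), the class is exactly \(\{b,-b\}\). These two elements are distinct because \(\Fn\) has \(2\cdot4^n\) elements \(\pm b\), by the central-product remark.

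The only step needing care is the local commutation rule. It is a finite check on the table: \(e\) commutes with everything, each symbol commutes with itself, and distinct imaginary units anticommute. The sign bookkeeping in order \(n\) then needs nothing beyond the digitwise definition.
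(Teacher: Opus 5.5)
Your proposal is correct and follows the paper's proof: coordinatewise commutation or anticommutation confines every conjugate to \(\{b,-b\}\), and conjugating by a word that is \(e\) everywhere except for a symbol anticommuting with \(b_r\) at one noncentral position \(r\) produces \(-b\). Your explicit mismatch count \(\kappa(b,c)\) and your check that \(b\ne -b\) in \(\Fn\) only make the paper's two-step argument more precise.
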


\begin{proof}
In each coordinate, conjugation by a quaternionic basis element sends the local symbol \(i,j,k,e\) either to itself or to its negative.  Hence every conjugate of \(b\) is necessarily \(b\) or \(-b\).

It remains to see that both occur. Since \(b\ne e_n\), there is a position \(r\) with \(b_r\in\{1,2,4\}\). Choose a base vector \(c\in\Dn\) whose entries are \(7\) in all positions except position \(r\), where we choose a quaternionic symbol that anticommutes with \(b_r\). Conjugation by \(c\) changes exactly one local sign and therefore produces the global element \(-b\).
\end{proof}

\begin{thm}[Cardinality of the centralizer]
If \(b\in\Dn\) and \(b\ne e_n\), then
\[
  |C_{\Fn}(b)|=4^n.
\]
\end{thm}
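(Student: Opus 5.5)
The natural route is the orbit–stabilizer theorem, applied to the conjugation action of \(\Fn\) on itself, combined with the preceding lemma on conjugacy classes. By the remark identifying \(\Fn\) with the central product \(Q_8^n/K_n\), we have
\[
  |\Fn|=2\cdot 4^n .
\]
For \(b\in\Dn\) with \(b\ne e_n\), the preceding lemma shows that the conjugacy class of \(b\) in \(\Fn\) is exactly \(\{b,-b\}\). Note that \(b\ne -b\) in \(\Fn\): as a signed word, \(-b\) is a distinct element of the signed group. So the class has size \(2\). The centralizer \(C_{\Fn}(b)\) is the stabilizer of \(b\) under conjugation, so orbit–stabilizer gives
\[
  |C_{\Fn}(b)|=\frac{|\Fn|}{2}=4^n .
\]

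I would also give a direct count that avoids citing the group order, since it is useful later for the tile sets. For \(c\in\Dn\), the product rule gives
\[
  cb=(-1)^{\kappa(b,c)}\,bc ,
\]
where \(\kappa(b,c)\) is the number of positions \(r\) with \(b_r,c_r\in\{1,2,4\}\) and \(b_r\ne c_r\). This holds because two distinct imaginary units anticommute, while equal symbols, or any symbol paired with \(e\), commute. Hence \(\pm c\) both lie in the centralizer exactly when \(\kappa(b,c)\) is even.

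Next, fix a position \(r_0\) with \(b_{r_0}\ne7\). For fixed choices of the other coordinates, the four choices of \(c_{r_0}\) split as follows. The choices \(7\) and \(b_{r_0}\) add \(0\) to \(\kappa\), while the other two imaginary units each add \(1\). So exactly two of the four choices make \(\kappa\) even. Therefore exactly half of \(\Dn\), namely \(4^n/2\) positive words, commute with \(b\). Adding signs gives \(2\cdot 4^n/2=4^n\) elements.

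The only point needing care is the sign convention, namely that \(b\) and \(-b\) are genuinely distinct in \(\Fn\), so that the class has size exactly two. This is guaranteed by \(|\Fn|=2\cdot4^n\), that is, by \(-e_n\ne e_n\), together with the lemma. Everything else is routine.
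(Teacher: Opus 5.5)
Your main argument is the paper's proof: \(|\Fn|=2\cdot4^n\), the conjugacy class \(\{b,-b\}\) of size two from the preceding lemma, and orbit--stabilizer. Your supplementary direct count is also correct: at a coordinate with \(b_{r_0}\ne7\), exactly two of the four choices of \(c_{r_0}\) make \(\kappa(b,c)\) even. It reverses the paper's order of deduction by obtaining \(|\Cb(b)|=4^n/2\) first, a count the paper derives afterwards as a corollary, using the same parity \(bc=(-1)^{B+C}cb\) that reappears in the cyclic-structure theorem.
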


\begin{proof}
The group \(\Fn\) has order
\[
  |\Fn|=2\cdot 4^n.
\]
By the preceding lemma, the conjugacy orbit of \(b\) has cardinality \(2\). The orbit-stabilizer theorem gives
\[
  |\Fn|=|\Orb(b)|\,|\Stab(b)|.
\]
The stabilizer of the conjugation action is precisely the centralizer \(C_{\Fn}(b)\). Therefore
\[
  2\cdot 4^n=2\,|C_{\Fn}(b)|,
\]
and hence \(|C_{\Fn}(b)|=4^n\).
\end{proof}

\begin{dfn}[Positive centralizer tiles]
For \(b\in\Dn\), define the positive tile set of the centralizer by
\[
  \Cb(b):=\{c\in\Dn:cb=bc\}.
\]
We also decompose \(\Cb(b)\) according to the sign of the common product:
\[
  \Cp(b):=\{c\in\Dn:cb=bc\text{ and }cb\in\Dn\},
\]
\[
  \Cn(b):=\{c\in\Dn:cb=bc\text{ and }cb\in-\Dn\}.
\]
Thus
\[
  \Cb(b)=\Cp(b)\sqcup \Cn(b).
\]
\end{dfn}

\begin{rem}
The symbols \(+\) and \(-\) in \(\Cp\) and \(\Cn\) refer to the sign of the common product \(cb=bc\), not to the distinction between commutation and anticommutation. The set \(\Cb(b)\) is the centralizer as seen on positive tiles only.
\end{rem}

\begin{cor}[One half of the positive tiling]
If \(b\in\Dn\) and \(b\ne e_n\), then
\[
  |\Cb(b)|=\frac{4^n}{2}.
\]
In particular, the centralizer displayed as a set of positive tiles occupies exactly one half of the order-\(n\) tiles.
\end{cor}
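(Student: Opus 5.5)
The plan is to read the count off the preceding theorem, \(|C_{\Fn}(b)|=4^n\), by showing that the signed centralizer is closed under negation and meets each sign pair in either both elements or neither. The corollary then follows by passing to positive representatives.

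First observe that the central element \(-e_n\) commutes with everything, so \(h\in C_{\Fn}(b)\) if and only if \(-h\in C_{\Fn}(b)\). Next, \(\Fn=\Dn\sqcup(-\Dn)\); this disjointness holds because each signed word has a unique sign and unsigned part, which is built into the definition of \(\Fn\) as a group of order \(2\cdot 4^n\). Combining these two facts, \(C_{\Fn}(b)\) is a union of pairs \(\{c,-c\}\) with \(c\in\Dn\). Moreover, for \(c\in\Dn\), the element \(c\) lies in \(C_{\Fn}(b)\) exactly when \(cb=bc\), which is exactly the condition \(c\in\Cb(b)\).

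The map \(c\mapsto\{c,-c\}\) is therefore a bijection from \(\Cb(b)\) onto the set of such pairs contained in \(C_{\Fn}(b)\). This gives \(|C_{\Fn}(b)|=2|\Cb(b)|\), and the preceding theorem yields \(|\Cb(b)|=4^n/2\). Since \(|\Dn|=4^n\), this set is exactly half of the order-\(n\) tiles.

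There is no real obstacle here. The only point to state carefully is that \(c\) and \(-c\) are distinct elements of \(\Fn\), so that no pair collapses to a single element. If one wanted to avoid the theorem, an alternative is a direct count: \(c\) commutes with \(b\) exactly when the number of positions where \(b_r,c_r\) are both noncentral and distinct is even. Fixing a coordinate \(r\) with \(b_r\ne7\) and toggling \(c_r\) between \(e\) and a symbol anticommuting with \(b_r\) is an involution on \(\Dn\) that flips this parity, which gives the half count directly.
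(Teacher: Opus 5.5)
Your proposal is correct and matches the paper's proof. The signed centralizer splits into disjoint pairs \(\{c,-c\}\) with \(c\in\Cb(b)\), so \(|C_{\Fn}(b)|=2|\Cb(b)|\), and the cardinality theorem then gives \(|\Cb(b)|=4^n/2\). (In your optional direct count, the involution must be defined on all four values of \(c_r\), for example \(e\leftrightarrow j\) and \(i\leftrightarrow k\) when \(b_r=i\); toggling only between \(e\) and one anticommuting symbol leaves the words with \(c_r\in\{i,k\}\) unpaired.)
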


\begin{proof}
Each \(c\in\Cb(b)\) contributes two elements to the signed centralizer, namely \(c\) and \(-c\). Conversely, every element of \(C_{\Fn}(b)\) is of the form \(c\) or \(-c\) with \(c\in\Cb(b)\). Hence
\[
  |C_{\Fn}(b)|=2|\Cb(b)|.
\]
The preceding theorem gives the result.
\end{proof}

\begin{lemma}[Unsigned part of a product]
For all \(b,c\in\Dn\), the products \(bc\) and \(cb\) have the same underlying base vector, possibly with different signs.
\end{lemma}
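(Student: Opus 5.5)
The plan is to reduce everything to order one by means of the digitwise definition of multiplication. Write \(b=b_1\cdots b_n\) and \(c=c_1\cdots c_n\). By the definition of digitwise multiplication,
\[
  bc=\Bigl(\prod_r\varepsilon_r\Bigr)d_1\cdots d_n,
  \qquad
  cb=\Bigl(\prod_r\varepsilon'_r\Bigr)d'_1\cdots d'_n,
\]
where \(b_rc_r=\varepsilon_rd_r\) and \(c_rb_r=\varepsilon'_rd'_r\) are the local quaternionic products. It therefore suffices to show that \(d_r=d'_r\) in every position \(r\). Once this holds, the underlying words of \(bc\) and \(cb\) coincide and only the global signs can differ.

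For the local claim I would invoke Proposition~\ref{prop:local-quaternion} in its Boolean form. In the two-bit encoding, the unsigned part of \(x\cdot y\) with \(x=(ab)\) and \(y=(cd)\) is \((a\xnor c)(b\xnor d)\). Since XNOR is commutative, this expression is symmetric in \(x\) and \(y\), so \(x\cdot y\) and \(y\cdot x\) have the same underlying symbol. Only the sign exponent \(m\) is asymmetric, and it affects nothing but the sign. As a cross-check, one can read the same fact directly from Table~\ref{tab:qmult}: the unsigned table is symmetric. Indeed, \(e\) is two-sided neutral, each of \(i,j,k\) squares to \(-e\), and distinct \(x,y\in\{i,j,k\}\) satisfy \(xy=-yx\), both products being \(\pm\) the third symbol.

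I would also note the consequence used later. The local sign ratio \(\varepsilon_r/\varepsilon'_r\) equals \(-1\) exactly when \(b_r\) and \(c_r\) are distinct elements of \(\{1,2,4\}\). Hence \(cb=(-1)^{k}bc\), where \(k\) is the number of such positions. This step is not needed for the lemma itself.

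There is no real obstacle here. The only point needing care is that the global sign is a product of local signs, so the sign bookkeeping never interferes with the unsigned word.
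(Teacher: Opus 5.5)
Your proposal is correct and follows essentially the same route as the paper: reduce to order one via the digitwise definition, then note that the unsigned part \((a\xnor c)(b\xnor d)\) is symmetric in the two factors because XNOR is commutative. Your table cross-check and the remark \(cb=(-1)^k bc\) are correct extras; the latter anticipates the paper's subsequent sign test and its centralizer counting.
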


\begin{proof}
In order one, the unsigned part of the bitwise rule is
\[
  (a\xnor c)(b\xnor d),
\]
which is symmetric in the two factors because XNOR is commutative. Applying this in each position gives the claim in every order.
\end{proof}

\begin{cor}[Sign test]
Let \(b,c\in\Dn\). Then \(c\) centralizes \(b\) if and only if \(bc\) and \(cb\) have the same sign. If the signs are opposite, then \(bc=-cb\).
\end{cor}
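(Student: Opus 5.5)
This is a direct consequence of the preceding lemma on the unsigned part of a product, combined with the fact that \(\Dn\) and \(-\Dn\) are disjoint inside \(\Fn\).

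First, write \(bc=\varepsilon d\) and \(cb=\varepsilon' d'\), with \(\varepsilon,\varepsilon'\in\{\pm1\}\) and \(d,d'\in\Dn\). The preceding lemma says the unsigned parts agree, so \(d=d'\). Hence
\[
  cb=\varepsilon'\varepsilon\,bc,
\]
because \(\varepsilon^2=1\).

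Second, recall that \(\Fn=\{\pm b:b\in\Dn\}\) has \(2\cdot 4^n\) elements. So \(d\ne -d\) for every base vector \(d\); that is, the signed words \(d\) and \(-d\) are distinct elements. In the algebra \(\An\) this holds because the words of \(\Dn\) form a basis.

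Now compare signs. If \(\varepsilon=\varepsilon'\), then \(bc=cb\), so \(c\) centralizes \(b\). If \(\varepsilon\ne\varepsilon'\), then \(cb=-bc\), and by the disjointness just noted \(bc\ne cb\), so \(c\) does not centralize \(b\). This proves both the equivalence and the final assertion \(bc=-cb\) in the opposite-sign case.

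There is no real obstacle here. The only point needing care is that \(d\neq -d\) in \(\Fn\), which I will take from the cardinality \(|\Fn|=2\cdot4^n\) recorded in the remark on central products.
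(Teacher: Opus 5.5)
Your proposal is correct and follows the paper's own proof. The preceding lemma gives a common underlying base vector, so \(bc\) and \(cb\) differ at most by a sign, and they are equal exactly when the signs agree. Your extra check that \(d\neq -d\) (from \(|\Fn|=2\cdot 4^n\), or from linear independence of the words in \(\An\)) only makes explicit a point the paper leaves implicit.
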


\begin{proof}
By the preceding lemma, the two products have the same underlying base vector. Thus they can differ only by sign. They are equal exactly when the signs agree, and they are opposite exactly when the signs differ.
\end{proof}

\begin{thm}[Cyclic structure of the signed centralizer components]\label{thm:centralizer-cyclic-structure}
Fix \(b\in\Dn\setminus\{e_n\}\).  For \(c\in\Dn\), consider only the
coordinates \(r\in\operatorname{supp}(b)\), and let
\[
\begin{aligned}
  A&=\#\{r:c_r=b_r\},\\
  B&=\#\{r:c_r=\cyc(b_r)\},\\
  C&=\#\{r:c_r=\cyc^2(b_r)\}.
\end{aligned}
\]
Coordinates with \(c_r=e\), and all coordinates with \(b_r=e\), are omitted
from these three counts.  Then
\[
  c\in\Cp(b)
  \quad\Longleftrightarrow\quad
  A\equiv B\equiv C\pmod 2,
\]
and
\[
  c\in\Cn(b)
  \quad\Longleftrightarrow\quad
  B\equiv C\not\equiv A\pmod 2.
\]
Consequently,
\[
  \cyc(\Cp(b))=\Cp(b),
\]
and there is a disjoint decomposition
\[
  \Dn
  =\Cp(b)\sqcup\Cn(b)\sqcup\cyc(\Cn(b))\sqcup\cyc^2(\Cn(b)).
\]
Equivalently, every global \(\cyc\)-orbit is either contained in \(\Cp(b)\)
or meets \(\Cn(b)\) in exactly one element.
\end{thm}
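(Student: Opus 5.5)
The plan is to reduce everything to a coordinatewise bookkeeping of two signs: the sign that decides commutation, and the sign of the product \(cb\). By the Sign test corollary, \(c\) centralizes \(b\) exactly when \(cb\) and \(bc\) have the same sign. Because multiplication is digitwise, \(cb=\bigl(\prod_r\varepsilon_r\bigr)d\) with \(d\) the unsigned product. So I will tabulate, for each coordinate \(r\), whether \(c_r\) and \(b_r\) commute and what local sign \(\varepsilon_r\) appears in \(c_rb_r\).

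The local analysis uses Table~\ref{tab:qmult} and splits into four cases.
\begin{itemize}
\item If \(b_r=e\) or \(c_r=e\): the symbols commute and \(\varepsilon_r=+1\). These are exactly the omitted coordinates, so they affect neither condition.
\item If \(c_r=b_r\in\{i,j,k\}\): they commute and \(c_rb_r=-e\), so \(\varepsilon_r=-1\). These coordinates are counted by \(A\).
\item If \(c_r=\cyc(b_r)\): they anticommute. For example \(ji=-k\), and by the automorphism property of \(\cyc\) the same sign occurs for all three choices of \(b_r\), so \(\varepsilon_r=-1\). These are counted by \(B\).
\item If \(c_r=\cyc^2(b_r)\): they anticommute and, as in \(ki=j\), \(\varepsilon_r=+1\). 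These are counted by \(C\).
\end{itemize}
Hence \(bc\) and \(cb\) differ by \((-1)^{B+C}\), and the sign of \(cb\) is \((-1)^{A+B}\).

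It follows that \(c\in\Cb(b)\) iff \(B\equiv C\pmod 2\). Then \(c\in\Cp(b)\) iff additionally \(A\equiv B\), that is, \(A\equiv B\equiv C\). Likewise \(c\in\Cn(b)\) iff \(B\equiv C\not\equiv A\). This gives the two displayed equivalences.

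For the cyclic statements, apply \(\cyc\) globally to \(c\) while keeping \(b\) fixed. The set of counted coordinates is unchanged, since \(\cyc\) fixes \(e\). A counted coordinate with \(c_r=\cyc^s(b_r)\) becomes \(\cyc^{s+1}(b_r)\), so the triple \((A,B,C)\) for \(\cyc c\) is \((C,A,B)\).
\begin{itemize}
\item The condition \(A\equiv B\equiv C\) is invariant under this permutation, so \(\cyc(\Cp(b))=\Cp(b)\); equality holds because \(\cyc\) is a bijection.
\item The set \(\cyc(\Cn(b))\) is the class where \(C\equiv A\not\equiv B\), and \(\cyc^2(\Cn(b))\) is the class where \(A\equiv B\not\equiv C\).
\end{itemize}

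Three parities are either all equal or have exactly one that differs from the other two, in one of three positions. So these four classes partition \(\Dn\), which gives the disjoint decomposition. The orbit formulation follows directly: a \(\cyc\)-orbit either stays in the all-equal class, or cycles the position of the odd parity through all three positions and so meets \(\Cn(b)\) exactly once. When the orbit has size one, only \(e_n\) or words with all counts zero arise, and these lie in \(\Cp(b)\).

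The only real risk is getting the local signs right: specifically, confirming that \(\cyc(a)\,a\) is negative and \(\cyc^2(a)\,a\) is positive for every \(a\in\{i,j,k\}\), in the product order \(cb\) used to define \(\Cp\) and \(\Cn\). I would check this directly against Table~\ref{tab:qmult}, using \(ji=-k\), \(kj=-i\), \(ik=-j\) and \(ki=j\), \(ij=k\), \(jk=i\).
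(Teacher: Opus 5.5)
Your proposal is correct and follows the paper's proof essentially step for step. You use the same local case analysis to get \(bc=(-1)^{B+C}cb\), and you observe, as the paper does, that applying \(\cyc\) to \(c\) cyclically permutes \((A,B,C)\), so the exceptional parity lands in each position exactly once. The only cosmetic difference is that you track the sign of \(cb\), namely \((-1)^{A+B}\), whereas the paper tracks the sign of \(bc\), namely \((-1)^{A+C}\); these agree once \(B\equiv C\), so both give the same two characterizations.
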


\begin{proof}
At a coordinate where \(b_r\ne e\), the four possibilities for \(c_r\) are
\[
  e,\qquad b_r,\qquad \cyc(b_r),\qquad \cyc^2(b_r).
\]
The first possibility contributes neither a commutation mismatch nor a negative
local sign.  The equal noncentral pair \(c_r=b_r\) commutes, while the two
distinct noncentral pairs anticommute.  Hence
\[
  bc=(-1)^{B+C}cb,
\]
so \(b\) and \(c\) commute exactly when
\[
  B+C\equiv0\pmod2.
\]

For the sign of \(bc\), the three noncentral relative types satisfy
\[
  b_rb_r=-e,\qquad
  b_r\cyc(b_r)=\cyc^2(b_r),\qquad
  b_r\cyc^2(b_r)=-\cyc(b_r).
\]
Thus
\[
  \operatorname{sgn}(bc)=(-1)^{A+C}.
\]
Once \(bc=cb\), this is the sign of the common product.  Therefore
\(c\in\Cp(b)\) exactly when
\[
  B+C\equiv0,\qquad A+C\equiv0\pmod2,
\]
which is equivalent to \(A\equiv B\equiv C\pmod2\).  Likewise
\(c\in\Cn(b)\) exactly when
\[
  B+C\equiv0,\qquad A+C\equiv1\pmod2,
\]
which is equivalent to \(B\equiv C\not\equiv A\pmod2\).

Applying \(\cyc\) to \(c\), while keeping \(b\) fixed, cyclically permutes the
three relative types \(b_r,\cyc(b_r),\cyc^2(b_r)\).  Hence the condition
\(A\equiv B\equiv C\pmod2\) is invariant, proving
\(\cyc(\Cp(b))=\Cp(b)\).  Among the eight parity triples for \((A,B,C)\),
two have all three parities equal and correspond to \(\Cp(b)\).  In each of the
other six, exactly one parity differs from the other two; cyclic permutation
places that exceptional parity in the \(A\)-position exactly once.  Thus every
remaining global cyclic orbit has exactly one representative in \(\Cn(b)\),
proving the disjoint decomposition.
\end{proof}

\begin{cor}[Equal sizes of the signed components]\label{cor:centralizer-component-sizes}
If \(b\in\Dn\setminus\{e_n\}\), then
\[
  |\Cp(b)|=|\Cn(b)|=4^{n-1}.
\]
\end{cor}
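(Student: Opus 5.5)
The plan is to combine the disjoint decomposition of Theorem~\ref{thm:centralizer-cyclic-structure} with the count of the positive centralizer tiles. By that theorem,
\[
  \Dn=\Cp(b)\sqcup\Cn(b)\sqcup\cyc(\Cn(b))\sqcup\cyc^2(\Cn(b)).
\]
Since \(\cyc\) is a bijection of \(\Dn\), the last three pieces all have cardinality \(|\Cn(b)|\). This gives
\[
  4^n=|\Cp(b)|+3|\Cn(b)|.
\]

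Next I will invoke the corollary on one half of the positive tiling. Together with \(\Cb(b)=\Cp(b)\sqcup\Cn(b)\), it gives
\[
  |\Cp(b)|+|\Cn(b)|=\tfrac{4^n}{2}.
\]
Subtracting this from the previous equation yields \(2|\Cn(b)|=4^n/2\), so \(|\Cn(b)|=4^{n-1}\). Substituting back gives \(|\Cp(b)|=4^n/2-4^{n-1}=4^{n-1}\).

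As a sanity check independent of the centralizer count, I could instead count directly using the parity characterization. Let \(s=|\operatorname{supp}(b)|\geq1\). The coordinates outside the support contribute a free factor \(4^{n-s}\). On the support, each coordinate of \(c\) falls into one of four types: \(e\), the type counted by \(A\), the type counted by \(B\), or the type counted by \(C\). The number of \(s\)-tuples with prescribed parities of \((A,B,C)\) can be extracted with a character sum: average \((1+x+y+z)^s\) over \(x,y,z\in\{\pm1\}\) against the appropriate signs. The sums \(1\pm1\pm1\pm1\) take the values \(4,2,2,2,0,0,0,-2\). For \(s\geq1\), the fully-equal-parity classes then total \(4^{s-1}\). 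This reproduces \(|\Cp(b)|=4^{n-1}\) and, by the same mechanism, \(|\Cn(b)|=4^{n-1}\).

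I do not expect a real obstacle in the main route. The only point needing care is that the union in the decomposition is genuinely disjoint and that \(\cyc\) preserves cardinality, both of which are already established. The hypothesis \(b\neq e_n\) is used exactly where the half-tiling corollary requires it.
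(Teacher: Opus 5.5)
Your main argument is correct and is exactly the paper's proof: the half-tiling corollary gives $|\Cp(b)|+|\Cn(b)|=4^n/2$, the disjoint decomposition of Theorem~\ref{thm:centralizer-cyclic-structure} gives $|\Cp(b)|+3|\Cn(b)|=4^n$, and solving yields $4^{n-1}$ for both. Your character-sum check is also correct: on the support, the parity classes $(0,0,0),(1,1,1)$ together have $4^{s-1}$ tuples, and so do $(1,0,0),(0,1,1)$, so it independently verifies the count without using the orbit--stabilizer centralizer count.
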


\begin{proof}
Put \(x=|\Cp(b)|\) and \(y=|\Cn(b)|\).  Corollary~6.6 gives
\[
  x+y=\frac{4^n}{2},
\]
while the disjoint decomposition in
Theorem~\ref{thm:centralizer-cyclic-structure} gives
\[
  x+3y=4^n.
\]
Solving yields \(x=y=4^{n-1}\).
\end{proof}

\begin{cor}[Equilateral orbit geometry]\label{cor:centralizer-equilateral-orbits}
Let \(b\in\Dn\setminus\{e_n\}\).  Then
\[
  \Cp(b)\setminus\{e_n\}
\]
is a disjoint union of
\[
  \frac{4^{n-1}-1}{3}
\]
global \(\cyc\)-orbits.  The centroids of the three words in each orbit form a
nondegenerate equilateral triangle centered at the origin.

The set \(\Cn(b)\) is a transversal of exactly \(4^{n-1}\) further global
\(\cyc\)-orbits: each such orbit contains one element of \(\Cn(b)\), while its
other two elements anticommute with \(b\).
\end{cor}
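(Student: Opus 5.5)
The plan is to read everything off Theorem~\ref{thm:centralizer-cyclic-structure} together with Corollary~\ref{cor:centralizer-component-sizes} and Corollary~\ref{cor:global-cyclic-orbits}. No new algebra is needed: the statement is a repackaging of the orbit decomposition already established, plus a little orbit counting.

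First I treat \(\Cp(b)\). The identity \(e_n\) has \(A=B=C=0\), so \(e_n\in\Cp(b)\); it is also the unique fixed point of the global action \(\cyc\). Indeed, a word with some coordinate in \(\{1,2,4\}\) is moved by \(\cyc\), because \(\cyc\) has no fixed digit in \(\{1,2,4\}\). Hence every global \(\cyc\)-orbit other than \(\{e_n\}\) has exactly three elements. Since \(\cyc(\Cp(b))=\Cp(b)\) by Theorem~\ref{thm:centralizer-cyclic-structure}, the set \(\Cp(b)\setminus\{e_n\}\) is a union of full orbits, each of size three. By Corollary~\ref{cor:centralizer-component-sizes} it has \(4^{n-1}-1\) elements, so it consists of \((4^{n-1}-1)/3\) orbits; this number is an integer because \(4\equiv1\pmod 3\).

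Second I handle the geometry. Each such orbit has the form \(\{c,\cyc c,\cyc^2 c\}\) with \(c\ne e_n\). Corollary~\ref{cor:global-cyclic-orbits} then shows that the centroids form an equilateral triangle, with distinct vertices \(P(c),RP(c),R^2P(c)\), where \(P(c)\ne 0\) by the no-cancellation lemma. Its center is \(\tfrac13(I+R+R^2)P(c)=0\), which is the origin.

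Third I treat \(\Cn(b)\). The disjoint decomposition \(\Dn=\Cp(b)\sqcup\Cn(b)\sqcup\cyc(\Cn(b))\sqcup\cyc^2(\Cn(b))\) shows that any orbit meeting \(\Cn(b)\) meets it in exactly one element. Its other two elements lie in \(\cyc(\Cn(b))\sqcup\cyc^2(\Cn(b))\), so they lie outside \(\Cb(b)=\Cp(b)\sqcup\Cn(b)\). By the sign test they therefore anticommute with \(b\). These orbits are distinct from the orbits in \(\Cp(b)\) because \(\Cp(b)\) is \(\cyc\)-invariant and disjoint from \(\Cn(b)\). Their number equals \(|\Cn(b)|=4^{n-1}\) by Corollary~\ref{cor:centralizer-component-sizes}. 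As a consistency check, \(1+3\cdot\frac{4^{n-1}-1}{3}+3\cdot 4^{n-1}=4^n\), so these orbits, the \(\Cp\)-orbits and \(\{e_n\}\) exhaust \(\Dn\).

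The only step requiring care is the claim that orbits other than \(\{e_n\}\) have exactly three elements. A noncentral coordinate of \(c\) is moved by \(\cyc\), so \(\cyc c\ne c\); since \(\cyc^3=\mathrm{id}\), the orbit size divides \(3\) and must therefore be \(3\). Everything else is direct citation of the earlier results.
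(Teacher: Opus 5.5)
Your proposal is correct and follows essentially the same route as the paper. The orbit count uses the \(\cyc\)-invariance of \(\Cp(b)\), the size \(4^{n-1}\) from Corollary~\ref{cor:centralizer-component-sizes}, and the fact that \(e_n\) is the unique fixed point; the centered equilateral geometry comes from Corollary~\ref{cor:global-cyclic-orbits}, and your explicit computation \(\tfrac13(I+R+R^2)P(c)=0\) just makes the centering at the origin explicit. The one cosmetic difference is that you get anticommutation of the other two orbit elements from the disjoint decomposition plus the sign test, while the paper reads it off the odd mismatch parity \(B+C\) in the proof of Theorem~\ref{thm:centralizer-cyclic-structure}; these amount to the same fact.
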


\begin{proof}
The identity word \(e_n\) is the unique fixed point of the global cyclic action;
every other orbit has size three.  The first assertion follows from
Theorem~\ref{thm:centralizer-cyclic-structure},
Corollary~\ref{cor:centralizer-component-sizes}, and
Corollary~\ref{cor:global-cyclic-orbits}.  For \(c\in\Cn(b)\), the proof of the
theorem shows that the other two cyclic images have odd commutation-mismatch
parity with \(b\), and hence anticommute with \(b\) by Corollary~6.8.
\end{proof}

\begin{cor}[Axis words and full dihedral symmetry]\label{cor:axis-centralizer-dihedral}
Let \(\tau\) be one of the three digit transpositions, and suppose that
\(\tau(b)=b\).  Then
\[
  \tau(\Cp(b))=\Cp(b),\qquad
  \tau(\Cn(b))=\Cn(b).
\]
Thus the two signed centralizer components are symmetric about the corresponding
main axis.  Moreover, \(\Cp(b)\) is invariant under the full digitwise
\(S_3\)-action, so its centroid set is symmetric under the full dihedral group
of the triangular tiling, in particular about all three main axes.
\end{cor}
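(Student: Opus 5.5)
The plan is to use the odd-permutation anti-automorphism property of \(\tau\) from the proposition on reflections, together with \(\tau(b)=b\). Let \(c\in\Dn\) with \(cb=bc=\varepsilon d\), where \(d\in\Dn\) and \(\varepsilon\in\{\pm1\}\). Applying \(\tau\) gives
\[
  \tau(c)\,b=\tau(c)\,\tau(b)=\tau(bc)=\varepsilon\,\tau(d),
  \qquad
  b\,\tau(c)=\tau(b)\,\tau(c)=\tau(cb)=\varepsilon\,\tau(d).
\]
Since \(\tau\) permutes \(\Dn\) and does not introduce signs on basis words, \(\tau(c)\in\Dn\) commutes with \(b\). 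Their common product is \(\varepsilon\tau(d)\), with \(\tau(d)\in\Dn\), so the sign \(\varepsilon\) is unchanged. Hence \(\tau\) maps \(\Cp(b)\) into \(\Cp(b)\) and \(\Cn(b)\) into \(\Cn(b)\). Because \(\tau\) is an involution, both inclusions are equalities.

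For the symmetry statement, I would invoke Corollary~\ref{cor:reflections-transpositions}. The centroid sets of the two components are carried to themselves by the reflection in the axis corresponding to \(\tau\).

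For full \(S_3\)-invariance of \(\Cp(b)\), note that \(S_3\) is generated by \(\tau\) and \(\cyc\). Theorem~\ref{thm:centralizer-cyclic-structure} already gives \(\cyc(\Cp(b))=\Cp(b)\). Combined with \(\tau\)-invariance, this shows that \(\Cp(b)\) is invariant under every digitwise permutation. Theorem~\ref{thm:centroid-equivariance} then converts this into invariance of the centroid set under every element \(\rho(\pi)\) of \(D_3\), including all three reflections.

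The only point needing care is to stress that this argument is for \(\Cp(b)\) only. The set \(\Cn(b)\) is not \(\cyc\)-invariant, since by Corollary~\ref{cor:centralizer-equilateral-orbits} it meets each of its orbits in a single element. So the full-dihedral claim must not be extended to \(\Cn(b)\).

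As a cross-check, one could also verify the first part directly from the parity description in Theorem~\ref{thm:centralizer-cyclic-structure}. At a coordinate with \(b_r\) fixed by \(\tau\), the transposition fixes the relative type \(b_r\) and swaps \(\cyc(b_r)\) with \(\cyc^2(b_r)\). At a coordinate with \(b_r=7\), the coordinate is omitted from the counts. So \(\tau\) fixes \(A\) and swaps \(B\) and \(C\), and both parity conditions are invariant under this swap. I expect no genuine obstacle.
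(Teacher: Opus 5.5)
Your proposal is correct and follows essentially the same route as the paper: apply the anti-automorphism \(\tau\) to \(cb=bc=\varepsilon d\), use that \(\tau\) is an involution to get equality of the sets, then combine \(\tau\)-invariance with \(\cyc\)-invariance from Theorem~\ref{thm:centralizer-cyclic-structure} and the equivariance of the centroid map. The one detail the paper handles that you skip is the case \(b=e_n\): it satisfies \(\tau(b)=b\) but is excluded from Theorem~\ref{thm:centralizer-cyclic-structure}, and there \(\Cp(e_n)=\Dn\) and \(\Cn(e_n)=\varnothing\), so the claim is immediate; your remark that \(\Cn(b)\) is not \(\cyc\)-invariant should likewise be restricted to \(b\ne e_n\).
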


\begin{proof}
If \(b=e_n\), then \(\Cp(b)=\Dn\) and \(\Cn(b)=\varnothing\), so the assertion
is immediate.  Hence assume \(b\ne e_n\).

Let \(c\in C^\varepsilon(b)\), where \(\varepsilon\in\{+1,-1\}\), and write
\[
  cb=bc=\varepsilon d,
  \qquad d\in\Dn.
\]
Since \(\tau\) is an algebra anti-automorphism and \(\tau(b)=b\),
\[
  \tau(cb)=b\,\tau(c),\qquad
  \tau(bc)=\tau(c)b.
\]
Both are equal to \(\varepsilon\tau(d)\), so
\(\tau(c)\in C^\varepsilon(b)\).  Applying \(\tau\) again gives equality of
the sets.  Theorem~\ref{thm:centralizer-cyclic-structure} supplies invariance of
\(\Cp(b)\) under \(\cyc\); a transposition together with the three-cycle
generates \(S_3\).  The geometric statement follows from the equivariance of
the centroid map.
\end{proof}

\begin{cor}[Symmetric families of positive centralizer components]\label{cor:symmetric-centralizer-families}
Let \(R\subseteq\Dn\) be nonempty, and set
\[
  \Cp^{\cup}(R):=\bigcup_{b\in R}\Cp(b),
  \qquad
  \Cp^{\cap}(R):=\bigcap_{b\in R}\Cp(b).
\]
If \(\tau\) is a digit transposition and
\(\tau(\sum_{b\in R}b)=\sum_{b\in R}b\), then both sets are invariant under
the full digitwise \(S_3\)-action.  Hence their centroid sets have full
\(D_3\)-symmetry.
\end{cor}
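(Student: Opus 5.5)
The plan is to reduce the statement to two invariance properties of the individual components \(\Cp(b)\): invariance under the three-cycle \(\cyc\) for each fixed \(b\), and the way a transposition \(\tau\) moves \(\Cp(b)\) to \(\Cp(\tau(b))\). Then I use that \(\tau\) and \(\cyc\) generate \(S_3\).

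First I translate the hypothesis into a statement about \(R\). The digitwise permutation \(\tau\) permutes the basis \(\Dn\) bijectively, with no signs, and \(\sum_{b\in R}b\) has coefficient \(1\) on \(R\) and \(0\) elsewhere. Comparing coefficients, \(\tau(\sum_{b\in R}b)=\sum_{b\in R}\tau(b)\) equals \(\sum_{b\in R}b\) exactly when \(\tau(R)=R\) as a set.

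Next I show \(\tau(\Cp(b))=\Cp(\tau(b))\) for every \(b\in\Dn\), generalizing the argument of Corollary~\ref{cor:axis-centralizer-dihedral}. Take \(c\in\Cp(b)\) and write \(cb=bc=d\) with \(d\in\Dn\). Since \(\tau\) is odd, it is an anti-automorphism by the proposition on reflections reversing multiplication order. Hence
\[
  \tau(b)\tau(c)=\tau(cb)=\tau(d)=\tau(bc)=\tau(c)\tau(b),
\]
and \(\tau(d)\in\Dn\) is positive. Therefore \(\tau(c)\in\Cp(\tau(b))\). Applying the same argument to \(\tau(b)\), and using \(\tau^2=\mathrm{id}\), gives equality of the sets. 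Combined with \(\tau(R)=R\), this shows that \(\tau\) permutes the family \(\{\Cp(b):b\in R\}\). Consequently \(\tau\) preserves both the union \(\Cp^{\cup}(R)\) and the intersection \(\Cp^{\cap}(R)\).

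For the three-cycle, Theorem~\ref{thm:centralizer-cyclic-structure} gives \(\cyc(\Cp(b))=\Cp(b)\) for each \(b\ne e_n\). For \(b=e_n\), every word commutes with \(e_n\) with positive product, so \(\Cp(e_n)=\Dn\), which is trivially invariant. So each member of the family is \(\cyc\)-invariant, and hence so are the union and the intersection.

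Since a transposition and the three-cycle generate \(S_3\), both sets are invariant under the full digitwise \(S_3\)-action. Theorem~\ref{thm:centroid-equivariance} then transfers this to full \(D_3\)-symmetry of the centroid sets.

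I expect no serious obstacle. The only points needing care are the translation of the hypothesis on the element \(\sum_{b\in R}b\) into set-invariance \(\tau(R)=R\), and the observation that \(\tau\) does not fix each individual \(b\) but only permutes the family of components.
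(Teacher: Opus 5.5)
Your proposal is correct and follows the paper's proof essentially step for step. The paper also converts the hypothesis into \(\tau(R)=R\) by linear independence of base words, uses the anti-automorphism property to get \(\tau(\Cp(b))=\Cp(\tau(b))\), invokes Theorem~\ref{thm:centralizer-cyclic-structure} (with \(b=e_n\) trivial) for \(\cyc\)-invariance, and concludes by generation of \(S_3\) and equivariance of the centroid map; your explicit check of \(\tau(\Cp(b))=\Cp(\tau(b))\), including positivity of \(\tau(d)\) and the \(\tau^2=\mathrm{id}\) step, just spells out what the paper states in one line.
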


\begin{proof}
For every \(\pi\in S_3\), the automorphism/anti-automorphism property gives
\[
  \pi(\Cp(b))=\Cp(\pi(b)).
\]
Linear independence of the base words turns the assumed symmetry of the sum
into \(\tau(R)=R\), so \(\tau\) preserves both the union and the intersection.
Theorem~\ref{thm:centralizer-cyclic-structure} gives
\(\cyc(\Cp(b))=\Cp(b)\) for \(b\ne e_n\), and the same is immediate for
\(b=e_n\).  Thus both sets are fixed by \(\tau\) and \(\cyc\), which generate
\(S_3\).  Equivariance of the centroid map gives the geometric conclusion.
\end{proof}

\begin{rem}[Squares and cancellations]
Let
\[
  X=\sum_{c\in\Dn} q_c c\in\An.
\]
Then
\[
  X^2=\sum_{c\in\Dn}q_c^2c^2+
       \sum_{\{c,d\}\subset\Dn,\ c\ne d}q_cq_d(cd+dc).
\]
By the sign test, for two base vectors \(c,d\), the products \(cd\) and \(dc\) have the same underlying base vector and may differ only by sign. If \(cd=-dc\), the mixed contribution \(q_cq_d(cd+dc)\) vanishes; if \(cd=dc\), it contributes \(2q_cq_dcd\). This gives a natural computational form for the square of a floretion and connects the calculation of \(X^2\) directly to the centralizer tile sets described above.
\end{rem}

\subsection{Parity projection and signed centralizer components}

We next record a parity projection identity that expresses the same
sign-cancellation principle algebraically.  The orientation of
a tile is governed by the parity of the number of noncentral digits, but the
following definition separates this parity from the visual up/down convention.

\begin{dfn}[Even and odd parts]
For a basis word \(b\in\Dn\), let
\[
  \nu(b):=N_{124}(b)\pmod 2.
\]
For \(X=\sum_{b\in\Dn}q_b b\in\An\), define
\[
  X_{\mathrm{even}}:=\sum_{\nu(b)=0}q_b b,
  \qquad
  X_{\mathrm{odd}}:=\sum_{\nu(b)=1}q_b b.
\]
Thus \(X=X_{\mathrm{even}}+X_{\mathrm{odd}}\).  We write
\(\An^{\mathrm{even}}\) and \(\An^{\mathrm{odd}}\) for the corresponding
linear subspaces.  When \(n\) is even, the even part is the upward part of the
tiling and the odd part is the downward part.  When \(n\) is odd, the two visual
orientations are interchanged.
\end{dfn}

\begin{prop}[Parity projection identities]
Let \(X=U+D\), where \(U=X_{\mathrm{even}}\) and \(D=X_{\mathrm{odd}}\).  Then:
\[
  D=X \quad\Longrightarrow\quad X^{2m}\in\An^{\mathrm{even}}
  \ \text{ and }\ X^{2m+1}\in\An^{\mathrm{odd}}
  \quad(m\geq0),
\]
and
\[
  U=X \quad\Longrightarrow\quad X^m\in\An^{\mathrm{even}}
  \quad(m\geq0).
\]
Moreover,
\[
  (X^2)_{\mathrm{odd}}=2\,(X\,X_{\mathrm{odd}})_{\mathrm{odd}}.
\]
Thus the odd part of a square is determined entirely by the interaction between
\(X\) and its odd component; the even-even and odd-odd contributions do not
survive the odd projection.
\end{prop}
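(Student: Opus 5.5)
The whole statement follows from one fact: the parity \(\nu\) is additive under multiplication of basis words. So I will first establish a \(\mathbb Z/2\)-grading of \(\An\) and then read off the three identities.

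\emph{Step 1 (grading).} For \(b,c\in\Dn\), write \(bc=\pm d\). I claim that
\[
  \nu(d)\equiv\nu(b)+\nu(c)\pmod 2 .
\]
It suffices to check this in one coordinate, since \(N_{124}\) is a sum over positions. Locally, \(e\cdot e=e\) gives \(0+0\mapsto0\). For \(a\in\{i,j,k\}\), the products \(e\cdot a\) and \(a\cdot e\) are \(a\), giving \(0+1\mapsto1\). For \(a,a'\in\{i,j,k\}\), the product \(aa'\) is \(\pm e\) when \(a=a'\) and \(\pm\) the third symbol otherwise, so \(1+1\mapsto0\) or \(1+1\mapsto1\). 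The last case breaks additivity. So in the raw digit count the parity is not additive mod \(2\).

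Using the three-bit form instead, where \(1,2,4,7\) are \(001,010,100,111\), the local product is the XOR of the three-bit forms: \(1\oplus2=4\) and \(1\oplus1=0\), which is read as \(7\) after complementing. A cleaner invariant is therefore needed. I will try the character \(\chi(b)=\) (the parity of the number of coordinates equal to a fixed symbol). This also fails, so I will check the statement directly on the order-one example \(X=i+j\), which has \(U=0\) and \(D=X\). Then
\[
  X^2=i^2+j^2+ij+ji=-2e ,
\]
which is even. Next,
\[
  X^3=-2e(i+j)=-2(i+j),
\]
which is odd. This is consistent. However, \(X=i\cdot jk\)-type examples in order two, such as \(ij\cdot ji\), need care. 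I expect the correct mechanism to be that only commuting pairs survive in symmetric expressions. This is exactly the Remark on squares and cancellations: products of anticommuting pairs cancel in \(cd+dc\).

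\emph{Step 2 (revised route via the squares remark).} For the identity \((X^2)_{\mathrm{odd}}=2(XD)_{\mathrm{odd}}\), expand
\[
  X^2=U^2+D^2+(UD+DU).
\]
I will show that for commuting basis words \(c,d\), the parity of the unsigned part of \(cd\) is \(\nu(c)+\nu(d)\). In a coordinate where both digits are noncentral and different, the local factors anticommute. Commutation of \(c\) and \(d\) forces an even number of such "mismatch" coordinates, by the proof of Theorem~\ref{thm:centralizer-cyclic-structure}, namely \(B+C\) even. Each mismatch coordinate spoils additivity by exactly \(1\), so an even number of them preserves additivity mod \(2\). Anticommuting pairs cancel in symmetrized sums. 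Hence \(U^2\) and \(D^2\) are even, being sums of \(c^2\) terms and symmetrized commuting pairs. Likewise \(UD+DU\) is odd. This gives
\[
  (X^2)_{\mathrm{odd}}=UD+DU .
\]
Finally I will compare \(UD+DU\) with \(2(XD)_{\mathrm{odd}}\). Here
\[
  XD=UD+D^2,\qquad (D^2)_{\mathrm{odd}}=0 .
\]
So it remains to show \((UD)_{\mathrm{odd}}=(DU)_{\mathrm{odd}}\). The odd part of \(UD\) comes from pairs with no net mismatch parity shift, that is, commuting pairs, on which \(ud=du\). Similarly the even part comes from anticommuting pairs, which is why \(UD+DU\) is purely odd.

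\emph{Step 3 (powers).} For the claims on \(X^m\), a symmetrized argument is not available. This is the main obstacle: general products \(cd\) of anticommuting words do not obey additive parity. I will try induction on \(m\) using the structure of powers. Since \(X^m\) commutes with \(X\), writing \(X^{m+1}=\tfrac12(X\cdot X^m+X^m\cdot X)\) reduces every contribution to symmetrized products \(cd+dc\) of basis words. Only commuting pairs survive there, and on those parity is additive by Step 2. Induction then gives the even/odd placement for \(D=X\) and evenness for \(U=X\).
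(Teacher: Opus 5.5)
Your final argument (Steps 2 and 3) is correct, but it takes a different route from the paper.

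\textbf{The paper's route.} The paper introduces the linear map \(\kappa(b)=(-1)^{N_{124}(b)}b\). This is quaternionic conjugation in every digit, hence an anti-automorphism of \(\An\), and its \(\pm1\)-eigenspaces are \(\An^{\mathrm{even}}\) and \(\An^{\mathrm{odd}}\). Everything is then read off directly:
\(\kappa(X^m)=\kappa(X)^m\) gives the power statements, and
\(\kappa(X^2)=(U-D)^2\) together with \(\kappa(XD)=(-D)(U-D)\) gives \((X^2)_{\mathrm{odd}}=UD+DU=2(XD)_{\mathrm{odd}}\).

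\textbf{Your route.} You work at the level of basis words, using
\(\nu(cd)\equiv\nu(c)+\nu(d)+M(c,d)\) (with \(\nu(cd)\) the parity of the underlying word) and \(cd=(-1)^{M(c,d)}dc\). Here \(M(c,d)\) counts coordinates carrying two distinct noncentral digits. So commuting pairs are parity-additive, and anticommuting pairs cancel in \(cd+dc\).

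This lemma is exactly the basis-level content of the paper's key fact, since \(\kappa(cd)=\kappa(d)\kappa(c)\) unwinds to \((-1)^{\nu(cd)}cd=(-1)^{\nu(c)+\nu(d)}dc\). Your symmetrization \(X^{m+1}=\tfrac12(XX^m+X^mX)\) plays the role of the paper's remark that reversing a product of identical factors changes nothing. Your induction closes because each ordered pair \((c,d)\) from the supports of \(X\) and \(X^m\) contributes either \(0\) or \(2cd\), of parity \(\nu(c)+\nu(d)\).

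\textbf{What each buys.} The paper's version is shorter and more conceptual: one structural map, after which each projection identity is a one-line eigenspace computation. Yours is more elementary and makes the term-by-term cancellation explicit. It ties the identity directly to the sign test and to the remark on squares and cancellations.

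\textbf{Clean-up for a write-up.} Delete the exploratory Step 1. Incidentally, the local product of three-bit forms is the complement of their XOR, since \(001\oplus010=011\), not \(100\). You also need not invoke the centralizer theorem (stated for \(b\ne e_n\)) to get \(cd=(-1)^{M(c,d)}dc\); it follows immediately from local commutation and anticommutation in each coordinate.
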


\begin{proof}
Let \(\kappa:\An\to\An\) be the linear map defined on basis words by
\[
  \kappa(b)=(-1)^{N_{124}(b)}b.
\]
Equivalently, \(\kappa\) applies quaternionic conjugation in each digit.  Hence
\(\kappa\) is an anti-automorphism:
\[
  \kappa(XY)=\kappa(Y)\kappa(X).
\]
The even and odd parts are the \(+1\) and \(-1\) eigenspaces of \(\kappa\).

If \(X=D\), then \(\kappa(X)=-X\), and for powers of a single element the
reversal of order has no effect:
\[
  \kappa(X^m)=\kappa(X)^m=(-1)^mX^m.
\]
This proves the first assertion.  If \(X=U\), the same argument gives
\(\kappa(X^m)=X^m\) for all \(m\), proving the second assertion.

For the last identity, write \(X=U+D\).  Since \(\kappa(U)=U\) and
\(\kappa(D)=-D\), the anti-automorphism property gives
\[
  \kappa(X^2)=\kappa(X)\kappa(X)=(U-D)^2 .
\]
Therefore
\[
\begin{aligned}
  (X^2)_{\mathrm{odd}}
   &=\frac{X^2-\kappa(X^2)}2\\
   &=\frac{(U+D)^2-(U-D)^2}{2}\\
   &=UD+DU .
\end{aligned}
\]
Similarly,
\[
  XD=UD+D^2,
\]
while
\[
  \kappa(XD)=\kappa(D)\kappa(X)=(-D)(U-D)=-DU+D^2 .
\]
Hence
\[
  (XD)_{\mathrm{odd}}
   =\frac{XD-\kappa(XD)}2
   =\frac{UD+DU}{2}.
\]
Thus \((X^2)_{\mathrm{odd}}=2(XD)_{\mathrm{odd}}\), as claimed.
\end{proof}

\begin{rem}
For even order this last identity says simply
\[
  (X^2)_{\mathrm{down}}=2\,(X\,X_{\mathrm{down}})_{\mathrm{down}}.
\]
It is another form of the same sign-cancellation principle visible in the
preceding centralizer structure.
\end{rem}

\begin{dfn}[Sums of centralizer components]
For \(b\in\Dn\), set
\[
  \Sigma_+(b):=\sum_{c\in\Cp(b)} c,\qquad
  \Sigma_-(b):=\sum_{c\in\Cn(b)} c,\qquad
  \Sigma_{\mathrm{tiles}}(b):=\sum_{c\in\Cb(b)} c.
\]
These sums are regarded as elements of the real algebra \(\An\).
\end{dfn}

\begin{prop}[Vanishing of the two signed components]\label{prop:signed-components-vanish}
Let \(b\in\Dn\) satisfy
\[
  b^2=e_n.
\]
Then
\[
  \Sigma_-(b)\Sigma_+(b)=0
  \qquad\text{and}\qquad
  \Sigma_+(b)\Sigma_-(b)=0.
\]
\end{prop}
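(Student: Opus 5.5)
If \(b=e_n\), then \(\Cn(b)=\varnothing\) and \(\Sigma_-(b)=0\), so both identities are immediate. From now on assume \(b\ne e_n\). The plan is a reindexing argument. We insert \(e_n=b^2\) between the two factors of each term of the double sum and show that this produces the same sum with the opposite sign.

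\emph{Step 1: multiplication by \(b\) permutes each signed component.} Let \(c\in C^\varepsilon(b)\), so that \(cb=bc=\varepsilon\tilde c\) with \(\tilde c\in\Dn\). The element \(\varepsilon\tilde c=cb\) is a product of two elements that commute with \(b\), so \(\tilde c\) commutes with \(b\).

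Multiplying \(cb=\varepsilon\tilde c\) on the right by \(b\) and using \(b^2=e_n\) gives \(c=\varepsilon\,\tilde c b\). Hence \(\tilde c b=b\tilde c=\varepsilon c\). This shows that \(\tilde c\in C^\varepsilon(b)\) with the same sign \(\varepsilon\). It also shows that applying the map \(c\mapsto\tilde c\) twice returns \(c\).

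So \(\phi_b(c):=\tilde c\) is an involution of \(\Cp(b)\) and an involution of \(\Cn(b)\). In particular it is a bijection of each set. This step is the only place where the hypothesis \(b^2=e_n\) enters, and it is the main point to check carefully: without that hypothesis the sign relation \(\tilde c b=\varepsilon c\) fails.

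\emph{Step 2: the sign flip.} Expand
\[
  \Sigma_+(b)\Sigma_-(b)=\sum_{c\in\Cp(b)}\ \sum_{d\in\Cn(b)} cd .
\]
For each term, write
\[
  cd=c\,e_n\,d=(cb)(bd)=\bigl(\phi_b(c)\bigr)\bigl(-\phi_b(d)\bigr)=-\phi_b(c)\phi_b(d).
\]
Here we used \(cb=+\phi_b(c)\) because \(c\in\Cp(b)\), and \(bd=-\phi_b(d)\) because \(d\in\Cn(b)\).

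\emph{Step 3: reindex.} The map \((c,d)\mapsto(\phi_b(c),\phi_b(d))\) is a bijection of \(\Cp(b)\times\Cn(b)\) by Step 1. Reindexing the double sum by this bijection therefore gives \(\Sigma_+(b)\Sigma_-(b)=-\Sigma_+(b)\Sigma_-(b)\). We are working in the real algebra \(\An\), so this forces \(\Sigma_+(b)\Sigma_-(b)=0\).

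The identity \(\Sigma_-(b)\Sigma_+(b)=0\) follows by the same argument with the roles of the two components exchanged. For \(d\in\Cn(b)\) and \(c\in\Cp(b)\) we write \(dc=(db)(bc)=(-\phi_b(d))(\phi_b(c))\). Reindexing over \(\Cn(b)\times\Cp(b)\) again yields the negative of the original sum, hence zero.
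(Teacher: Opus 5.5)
Your proof is correct and is essentially the paper's argument. Both rest on the same key step, that \(c\mapsto\pm cb\) permutes each \(C^\varepsilon(b)\), so that \(\Sigma_\varepsilon(b)b=b\Sigma_\varepsilon(b)=\varepsilon\Sigma_\varepsilon(b)\). The paper then gets \(X=-X\) for \(X=\Sigma_-(b)\Sigma_+(b)\) by comparing \(Xb\) with \(bX\) and using that \(X\) commutes with \(b\). You instead insert \(b^2=e_n\) between the two factors and reindex term by term, which is the same sign flip written at the level of individual products.

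One small correction to your commentary: the hypothesis \(b^2=e_n\) is also used in Step~2, when you write \(cd=c\,b^2\,d\), so Step~1 is not the only place it enters.
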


\begin{proof}
Write \(C^\varepsilon(b)\), with \(\varepsilon\in\{+1,-1\}\), for \(\Cp(b)\) or \(\Cn(b)\), respectively, and set
\[
  \Sigma_\varepsilon(b):=\sum_{c\in C^\varepsilon(b)}c.
\]
If \(c\in C^\varepsilon(b)\), then \(c\) centralizes \(b\) and
\[
  cb=bc=\varepsilon d
\]
for a unique \(d\in\Dn\). Since \(b^2=e_n\), we obtain
\[
  db=\varepsilon cb^2=\varepsilon c.
\]
Moreover \(d\) still centralizes \(b\), since \(d=\varepsilon cb\), and hence
\[
  bd=\varepsilon bcb=\varepsilon cbb=\varepsilon c,
\]
while \(db=\varepsilon c\) as above. Therefore \(d\in C^\varepsilon(b)\). Thus right multiplication by \(b\), after forgetting the sign, permutes \(C^\varepsilon(b)\). Consequently
\[
  \Sigma_\varepsilon(b)b=\varepsilon\Sigma_\varepsilon(b).
\]
Since all elements in \(C^\varepsilon(b)\) centralize \(b\), we also have
\[
  b\Sigma_\varepsilon(b)=\varepsilon\Sigma_\varepsilon(b).
\]

Now put
\[
  X=\Sigma_-(b)\Sigma_+(b).
\]
On one hand,
\[
  Xb=\Sigma_-(b)(\Sigma_+(b)b)=\Sigma_-(b)\Sigma_+(b)=X.
\]
On the other hand,
\[
  bX=(b\Sigma_-(b))\Sigma_+(b)=-\Sigma_-(b)\Sigma_+(b)=-X.
\]
But \(X\) centralizes \(b\), because both factors lie in the subalgebra generated by elements that centralize \(b\). Hence \(Xb=bX\), and the displayed equalities give \(X=-X\). Over \(\mathbb{R}\), this implies \(X=0\). The proof of \(\Sigma_+(b)\Sigma_-(b)=0\) is identical, with the order of the two factors reversed.
\end{proof}

\begin{cor}[Rotated vanishing]\label{cor:rotated-vanishing}
Let \(b\in\Dn\) satisfy \(b^2=e_n\).  Then, for \(r=0,1,2\),
\[
  \Sigma_+(b)\,\cyc^r(\Sigma_-(b))=0,
  \qquad
  \cyc^r(\Sigma_-(b))\,\Sigma_+(b)=0.
\]
\end{cor}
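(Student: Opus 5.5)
The plan is to reduce the rotated identities to Proposition~\ref{prop:signed-components-vanish} by transporting it along the cyclic automorphism. The case \(r=0\) is exactly that proposition, so assume \(r\in\{1,2\}\).

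First, record that \(\cyc\) is an even digit permutation. By the proposition on reflections reversing multiplication order, it therefore acts as an algebra automorphism of \(\An\). It also fixes \(e_n\).

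Second, show that \(\pi(\Cp(b))=\Cp(\pi(b))\) for every \(\pi\in S_3\), as used in Corollary~\ref{cor:symmetric-centralizer-families}. For \(\pi=\cyc^r\) this is quick to verify directly: \(\cyc^r\) is an automorphism and preserves the sign of basis words. So \(cb=bc=\varepsilon d\) gives \(\cyc^r(c)\cyc^r(b)=\cyc^r(b)\cyc^r(c)=\varepsilon\cyc^r(d)\). Combined with Theorem~\ref{thm:centralizer-cyclic-structure}, which gives \(\cyc(\Cp(b))=\Cp(b)\), we get
\[
\Cp(\cyc^r b)=\cyc^r(\Cp(b))=\Cp(b).
\]
In terms of sums, \(\cyc^r(\Sigma_+(b))=\Sigma_+(b)\). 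Similarly \(\cyc^r(\Sigma_-(b))=\Sigma_-(\cyc^r b)\).

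Third, apply the automorphism \(\cyc^r\) to the two identities of Proposition~\ref{prop:signed-components-vanish}. This yields
\[
\cyc^r(\Sigma_-(b))\,\cyc^r(\Sigma_+(b))=0,
\qquad
\cyc^r(\Sigma_+(b))\,\cyc^r(\Sigma_-(b))=0.
\]
Substituting \(\cyc^r(\Sigma_+(b))=\Sigma_+(b)\) gives both claimed identities directly. The hypothesis \(b^2=e_n\) is used only through the proposition; for \(b=e_n\) we have \(\Cn(b)=\varnothing\), so everything is trivially zero.

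The only step needing care is the invariance \(\cyc^r(\Sigma_+(b))=\Sigma_+(b)\). It is not a consequence of \(\cyc\) fixing \(b\), since in general \(\cyc(b)\ne b\). It comes from the parity characterization \(A\equiv B\equiv C\pmod2\) in Theorem~\ref{thm:centralizer-cyclic-structure}, which is invariant under cycling \(c\) with \(b\) held fixed. Here I will invoke that theorem directly rather than the equivariance statement.

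A fallback route avoids the automorphism property altogether. Note that \(\cyc^r(\Cn(b))\subseteq\Dn\setminus\Cp(b)\) consists of words commuting or anticommuting with \(b\) with a controlled sign. One would rerun the eigenvalue argument of the proposition: show \(b\,\cyc^r(\Sigma_-(b))=-\cyc^r(\Sigma_-(b))\,b\) for \(r=1,2\) (the rotated elements anticommute with \(b\) by Corollary~\ref{cor:centralizer-equilateral-orbits}), and then combine this with \(b\Sigma_+=\Sigma_+b=\Sigma_+\). However, the automorphism argument above is shorter, and I would present it.
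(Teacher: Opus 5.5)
Your automorphism argument is the paper's proof: Theorem~\ref{thm:centralizer-cyclic-structure} gives \(\cyc(\Sigma_+(b))=\Sigma_+(b)\), and this is trivial when \(b=e_n\). Applying the automorphism \(\cyc^r\) to the two identities of Proposition~\ref{prop:signed-components-vanish} then finishes the proof; your detour through \(\Cp(\cyc^r b)=\cyc^r(\Cp(b))\) is correct but unnecessary. Do not rely on the fallback as sketched, however. For \(r=1,2\) the product \(X=\Sigma_+(b)\,\cyc^r(\Sigma_-(b))\) anticommutes with \(b\), so the relations \(bX=X\) and \(Xb=-X\) are mutually consistent and force nothing. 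It can be repaired by using the right-invariance of \(\Sigma_+(b)=\Sigma_+(\cyc^r b)\) under \(\cyc^r(b)\) in place of \(b\).
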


\begin{proof}
If \(b=e_n\), then \(\Sigma_-(b)=0\) and the assertion is immediate.  Otherwise,
Theorem~\ref{thm:centralizer-cyclic-structure} gives
\[
  \cyc(\Sigma_+(b))=\Sigma_+(b).
\]
The three-cycle \(\cyc\) is an algebra automorphism.  Applying \(\cyc^r\) to
the two identities of Proposition~\ref{prop:signed-components-vanish} therefore
gives the result.
\end{proof}

\begin{exa}
For \(b=ii\) in order two,
\[
  \Cn(ii)=\{ie,jk,kj,ei\},\qquad
  \Cp(ii)=\{ii,jj,kk,ee\}.
\]
Thus
\[
  \Sigma_-(ii)=ie+jk+kj+ei,\qquad
  \Sigma_+(ii)=ii+jj+kk+ee.
\]
Since \((ii)^2=e_2\), the preceding proposition gives
\[
  (ie+jk+kj+ei)(ii+jj+kk+ee)=0.
\]
\end{exa}

\begin{figure}[!htbp]
\centering
\begin{tabular}{ccc}
\maybeincludegraphics[width=.21\textwidth]{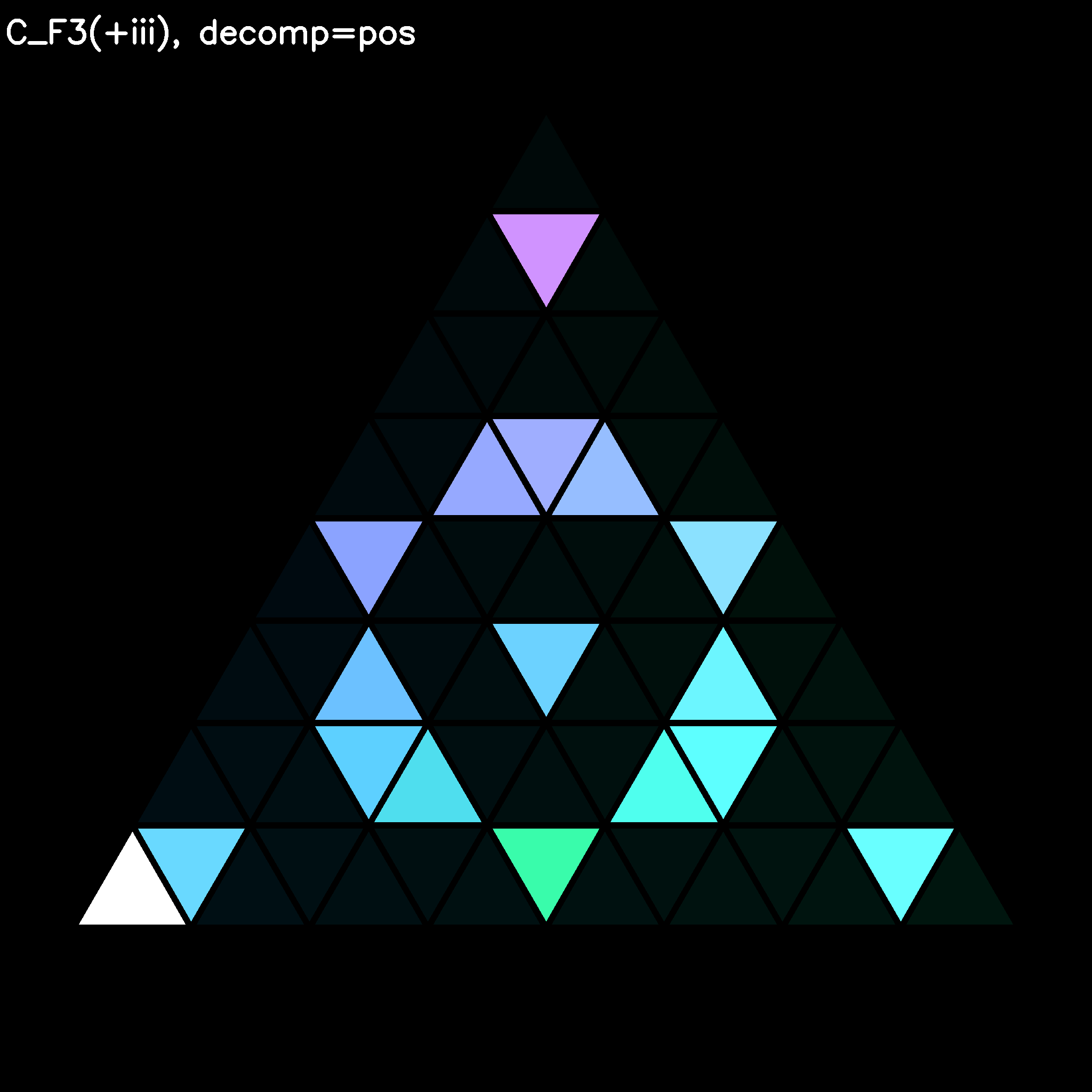} &
\maybeincludegraphics[width=.21\textwidth]{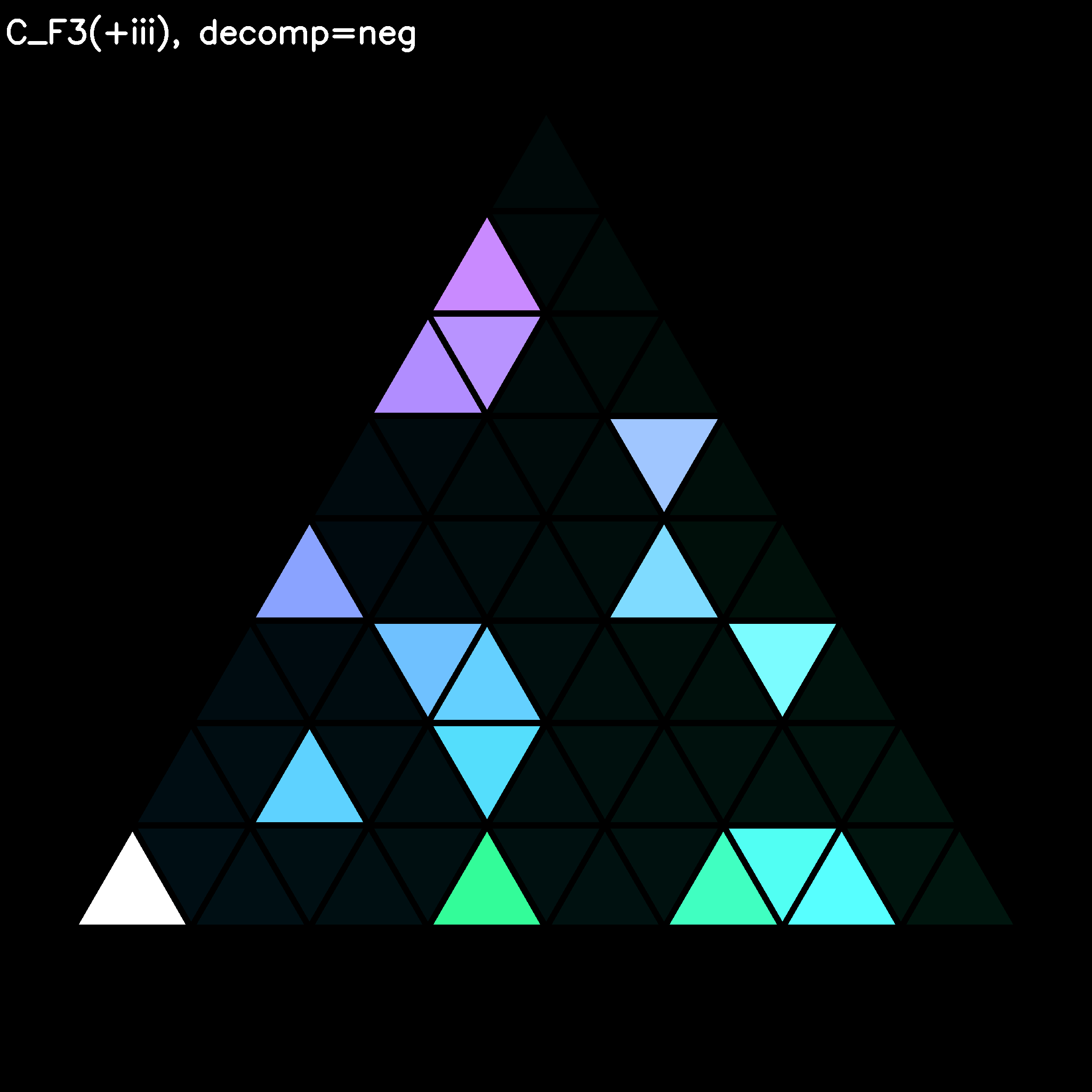} &
\maybeincludegraphics[width=.21\textwidth]{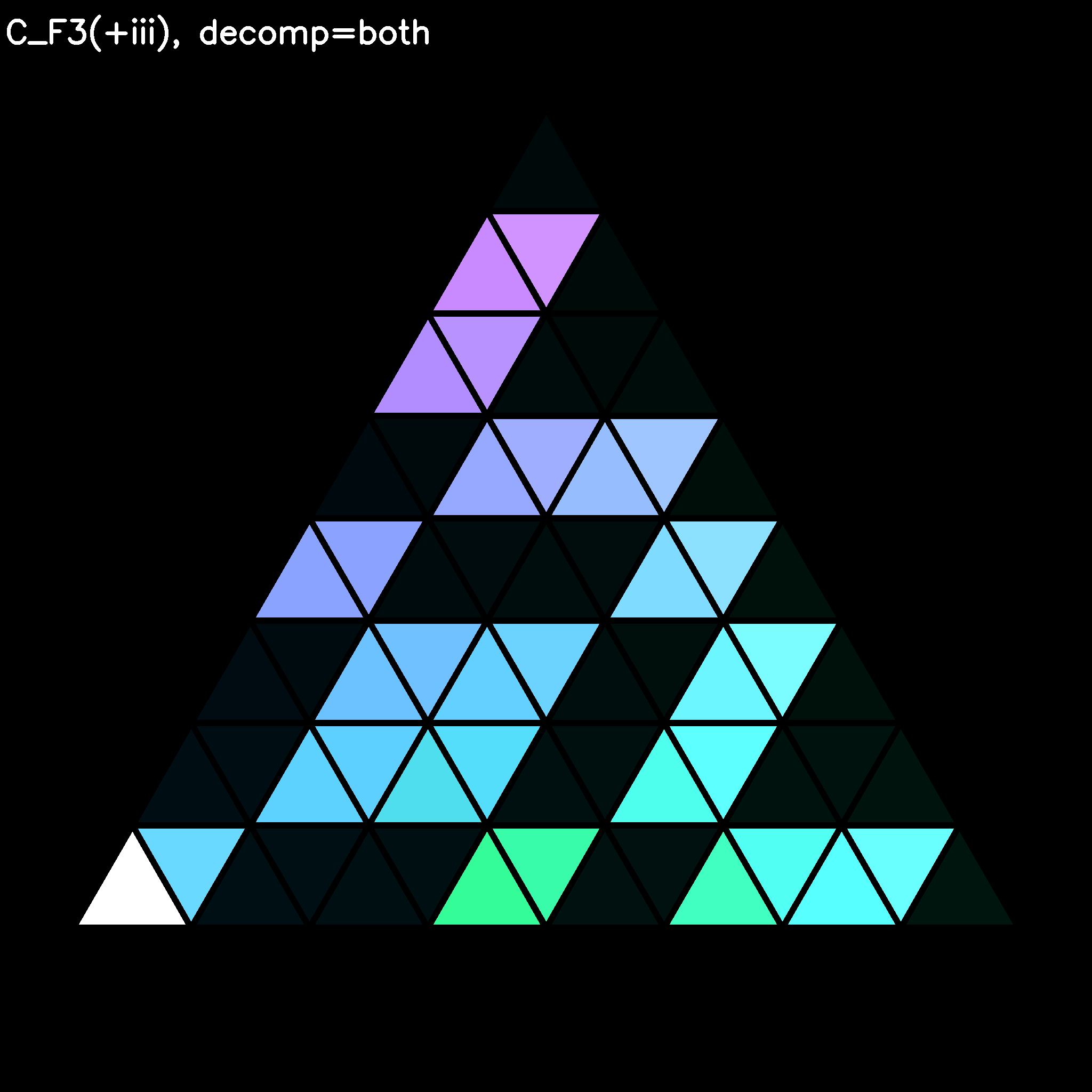}\\[-2pt]
\scriptsize (a) Positive component \(\Cp(b)\) &
\scriptsize (b) Negative component \(\Cn(b)\) &
\scriptsize (c) Union \(\Cb(b)\)
\end{tabular}
\caption{A labeled decomposition of \(\Cb(b)\) into the components \(\Cp(b)\) and \(\Cn(b)\) with base vector \(b=iii\).}
\label{fig:decomp111}
\end{figure}

\begin{figure}[!htbp]
\centering
\begin{minipage}{.45\textwidth}
\centering
\maybeincludegraphics[width=.98\linewidth,height=.24\textheight,keepaspectratio]{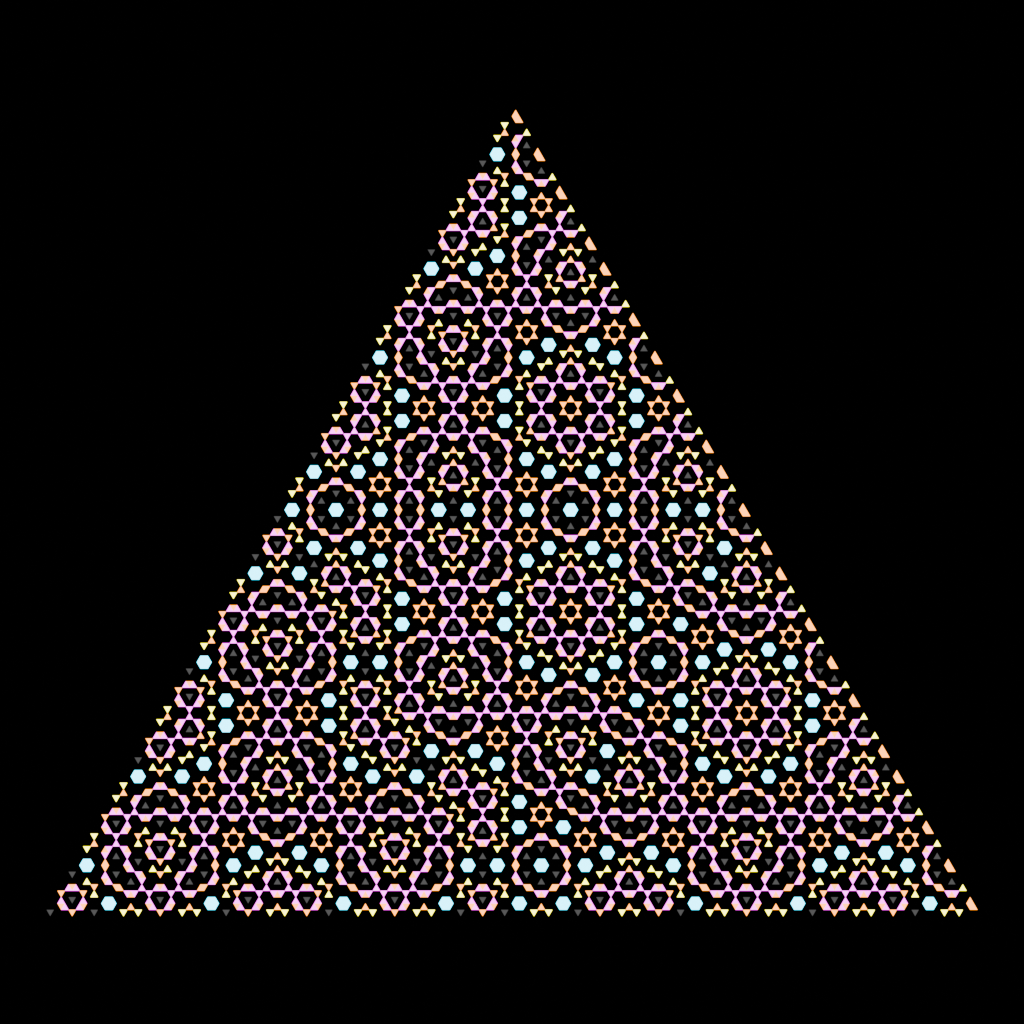}\\[-2pt]
\small (a) Negative component \(\Cn(b)\)
\end{minipage}
\hfill
\begin{minipage}{.45\textwidth}
\centering
\maybeincludegraphics[width=.98\linewidth,height=.24\textheight,keepaspectratio]{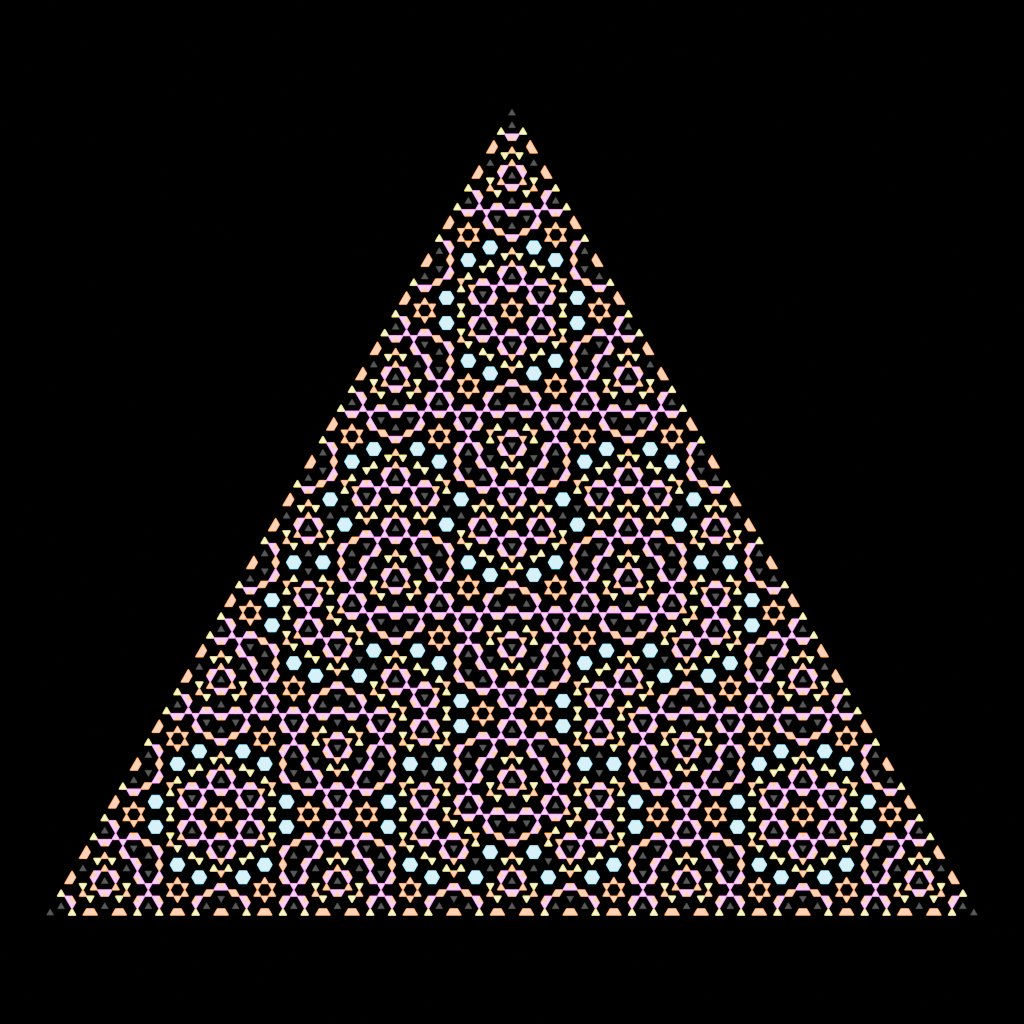}\\[-2pt]
\small (b) Positive component \(\Cp(b)\)
\end{minipage}
\caption{The two centralizer components for the base vector \(b=ieiiiii\). These pictures are not needed for the proofs, but they make the geometric structure of the centralizer tiles visible. The visualizations were generated with the Floretion add-on for Blender, with a tile's color chosen according to the number of neighboring tiles with nonzero coefficients.}
\label{fig:centralizercomponents}
\end{figure}

\FloatBarrier

Theorem~\ref{thm:centralizer-cyclic-structure} explains the threefold rotational
structure visible in the positive components of Figures~\ref{fig:decomp111}
and~\ref{fig:centralizercomponents}.  In both figures the chosen base word is
fixed by the \(I\)-axis reflection, so
Corollary~\ref{cor:axis-centralizer-dihedral} upgrades this to full dihedral
symmetry of the positive component and also explains the \(I\)-axis symmetry of
the negative component.

\subsection{Parity-dependent axis landing and accidental symmetry}
\label{subsec:parity-axis-landing}

The axis-landing criterion shows that noncommutation does not exhaust the
possibilities for symmetry.  If two elements are symmetric about the same axis
but do not commute, their product cannot remain on that original axis; however,
it may still land on another triangular axis when the corresponding rotated
reversal identity holds.  The following family gives a small test case in which
products of \(I\)-axis-symmetric elements can land on another axis, or on no
triangular axis at all.

Write
\[
  \iota_n:=\underbrace{ii\cdots i}_{n\ \mathrm{times}}\in\Dn,
  \qquad
  I_n:=\sum_{w\in\{i,e\}^n} w .
\]
Thus \(\iota_n\) is the uniform \(i\)-word of order \(n\), while \(I_n\)
is the order-\(n\) \(I\)-axis element.  Put
\[
  C_n^-:=\Sigma_-(\iota_n),\qquad
  C_n^+:=\Sigma_+(\iota_n),\qquad
  C_n^b:=\Sigma_{\mathrm{tiles}}(\iota_n).
\]
All four elements \(I_n,C_n^-,C_n^+\), and \(C_n^b\) are symmetric with
respect to the \(I\)-axis.  Nevertheless their products need not remain on
that axis.

For two symbols \(a,b\in\{e,i,j,k\}\), define
\[
  E_{ab}^{(n)}
  :=
  \sum_{\substack{w\in\{a,b\}^n\\ \#\{r:w_r=b\}\ \mathrm{even}}} w .
\]
A direct calculation gives
\begin{align}
  C_n^- I_n
    &=2^{n-1}\bigl(E_{ij}^{(n)}-E_{ek}^{(n)}\bigr), \label{eq:CminusIn}\\
  C_n^+ I_n
    &=2^{n-1}\bigl(E_{ij}^{(n)}+E_{ek}^{(n)}\bigr), \label{eq:CplusIn}\\
  C_n^b I_n
    &=2^n E_{ij}^{(n)}. \label{eq:CbIn}
\end{align}
In the reverse order one obtains
\begin{align}
  I_n C_n^-
    &=2^{n-1}\bigl(E_{ik}^{(n)}-E_{ej}^{(n)}\bigr), \label{eq:InCminus}\\
  I_n C_n^+
    &=2^{n-1}\bigl(E_{ik}^{(n)}+E_{ej}^{(n)}\bigr), \label{eq:InCplus}\\
  I_n C_n^b
    &=2^n E_{ik}^{(n)}. \label{eq:InCb}
\end{align}
The proof of these identities is given in Appendix~\ref{app:axis-landing-proof}.

The resulting axis behavior for the products \(C_n^\bullet I_n\) is:
\[
\begin{array}{c|c|c|c}
 n & C_n^-I_n & C_n^+I_n & C_n^bI_n\\
\hline
 1 & I\text{-axis} & I\text{-axis} & I\text{-axis}\\
 2 & K\text{-axis} & I,J,K\text{-axes} & K\text{-axis}\\
 \text{even }n\ge 4 & K\text{-axis} & K\text{-axis} & K\text{-axis}\\
 \text{odd }n\ge 3 & \text{none} & \text{none} & \text{none}.
\end{array}
\]
For the reverse products \(I_nC_n^\bullet\), the corresponding \(K\)-axis
entries are replaced by \(J\)-axis entries.

The parity mechanism is visible already in \(E_{ij}^{(n)}\).  By
Corollary~\ref{cor:reflections-transpositions}, the \(K\)-axis reflection is
the digit transposition that fixes \(k\) and swaps \(i\) and \(j\).  If a
word in \(\{i,j\}^n\) has \(t\) copies of \(j\), then its reflected word has
\(n-t\) copies of \(j\).  Thus the condition ``\(t\) is even'' is preserved
when \(n\) is even, but is changed to the complementary odd condition when
\(n\) is odd.  This explains why odd orders may look visually structured
while failing exact coefficientwise reflection symmetry.

The order-two case has an additional symmetry enhancement.  For instance,
\[
  C_2^+I_2=2(ii+jj+kk+ee),
\]
which is genuinely symmetric with respect to all three triangular axes.  By
contrast,
\[
  C_2^-I_2=2ii+2jj-2kk-2ee
\]
is only \(K\)-axis symmetric coefficientwise; the negative signs on the
\(kk\) and \(ee\) terms break the other two axis symmetries.

The example uses the uniform base word \(\iota_n\): every noncentral coordinate
points in the same local \(i\)-direction, so the same two parity tests are
repeated in every coordinate.  For a general base word \(b\), each noncentral
coordinate has its own local commutation and sign parity.  Thus products such
as \(\Sigma_\varepsilon(b)X\), with \(X\) chosen independently to be
axis-symmetric, form a broader parity-code problem.  They may land on another
triangular axis, on no triangular axis, or occasionally on a larger symmetry
class.

\section{Concluding remarks}

The guiding point of the paper is that the floretion basis is a distinguished
coordinate system for the familiar algebra \(\mathbb H^{\otimes n}\).  In this
basis, quaternionic multiplication, recursive triangular tilings, digitwise
\(S_3\)-actions, reflection symmetries, centralizer tile sets, parity
cancellation, and parity-dependent axis landing can be read in the same
notation.  The
results above should therefore be understood as properties of this coordinate
model rather than as claims about a new abstract group or algebra.

The equivariance theorem identifies the symbolic \(S_3\)-action with the
geometric action on the equilateral triangle.  The synchronized local-cycle
theorem then gives a direct interaction between the algebra and the geometry.
A local cyclic orbit is an equilateral triangle; its cyclically ordered product
is an explicitly signed symbolic center word, while the unchanged coordinates
determine the Euclidean center.  The parity criterion records exactly when the
centroid of the symbolic center word coincides with that Euclidean center.

Since odd digit permutations are algebra anti-automorphisms, triangular
reflection symmetry also detects commutation.  If two elements are symmetric
about the same axis, their product remains symmetric about that axis exactly
when the two elements commute.  The axis-landing criterion records the
complementary possibility: even when a product does not remain on the original
axis, it may still land on another triangular axis through a rotated reversal
identity.  Thus the triangular geometry does not merely display the basis; it
records when products preserve, reverse, or twist reflection symmetry.

The centralizer theorem gives a second manifestation of the same sign
structure.  For every non-unit basis word, the signed centralizer has
cardinality \(4^n\), and the corresponding positive tile set occupies exactly
one half of the order-\(n\) tiling.  The signed components have a sharper cyclic
geometry: \(C_+(b)\) is invariant under the global three-cycle and decomposes,
apart from the identity, into centered equilateral orbits, whereas \(C_-(b)\)
selects one vertex from each of a complementary family of such orbits.  For an
axis word, the positive component has full dihedral symmetry.  The parity
identity
\[
  (X^2)_{\mathrm{odd}}=2\,(X\,X_{\mathrm{odd}})_{\mathrm{odd}}
\]
is a compatible projection form of the same cancellation principle.  The
parity-dependent example with \(C_n^-\), \(C_n^+\), \(C_n^b\), and \(I_n\)
then shows that the signed centralizer components can behave differently under
multiplication by an external axis-symmetric element.  In particular,
coefficientwise symmetry can be broken in odd orders, redirected to a different
axis in even orders, or enhanced in small orders such as \(n=2\).

Several natural continuations arise from the interaction between the symbolic
and geometric structures developed here.  The equilateral-completion problem
for the centroids of order-\(n\) tiles is developed systematically in
\cite{DementEquilateral2026}.  That work counts and characterizes the
synchronized local-cycle family inside the full set of equilateral centroid
triangles and develops further product-point and reflection-symmetry structure.
Thus the local cyclic orbits introduced here serve both as an intrinsic feature
of the coordinate model and as a distinguished subclass in the broader
completion problem.

A second direction is to refine the sign structure underlying the centralizer
theorem.  The commutation criterion retains only the total parity of the local
sign contributions, whereas the individual contributions may carry a finer
multichannel signature.  One may ask whether the resulting signature classes
are fibers or cosets of natural maps over \(\mathbb F_2\), how their
cardinalities depend on the chosen basis word, and how they transform under
digit permutations and reflections.  Their intersections with triangular axes
and recursive subtiles may in turn provide an algebraic explanation for the
connected components, boundaries, and other geometric features visible in the
positive and negative centralizer supports, both within the fundamental
triangle and after unfolding it into the triangular reflection tiling.

More generally, one may study families \(X_n,Y_n\) of axis-symmetric
floretions and classify the set of orders \(n\) for which \(X_nY_n\) has a
triangular reflection symmetry.  The parity-dependent examples above suggest
that such symmetry sets are not governed by ordinary commutation alone, but may
also depend on finer sign constraints, cancellations between signed
centralizer components, and occasional low-order symmetry enhancements.

\section*{Acknowledgements}

The author thanks readers and collaborators who have contributed to the development of the algebraic and geometric viewpoint on floretions, in particular to Neil Sloane for his encouragement of the author's early work.

\clearpage
\appendix

\section{Proof of the parity-dependent product formulas}
\label{app:axis-landing-proof}

We prove the product identities used in Subsection~\ref{subsec:parity-axis-landing}.  The proof is by
induction on the order \(n\).  The main point is that the identities needed
in the paper are not quite closed under induction by themselves.  To make the
induction close, we temporarily keep track of two parity bits.

Throughout this appendix, all parity subscripts are read modulo \(2\); in particular, expressions such as \(\varepsilon+1\), \(\alpha+1\), and \(\alpha+\delta\) are interpreted in \(\mathbb F_2\).  Let
\[
  \iota_n:=\underbrace{ii\cdots i}_{n\ \mathrm{times}}\in\Dn,\qquad
  I_n=\sum_{x\in\{e,i\}^n}x,
\]
and put
\[
  C_n^-=\Sigma_-(\iota_n),\qquad
  C_n^+=\Sigma_+(\iota_n),\qquad
  C_n^b=C_n^-+C_n^+ .
\]

For a single local symbol \(s\in\{e,i,j,k\}\), define two bits
\[
  p(s)=
  \begin{cases}
    0,& s\in\{e,i\},\\
    1,& s\in\{j,k\},
  \end{cases}
  \qquad
  q(s)=
  \begin{cases}
    0,& s\in\{e,k\},\\
    1,& s\in\{i,j\}.
  \end{cases}
\]
Equivalently,
\[
\begin{array}{c|cccc}
s      & e & i & j & k\\ \hline
p(s)  & 0 & 0 & 1 & 1\\
q(s)  & 0 & 1 & 1 & 0
\end{array}
\]
The bit \(p\) records whether \(s\) anticommutes with \(i\): the symbols
\(j\) and \(k\) anticommute with \(i\), while \(e\) and \(i\) commute with
\(i\).  The bit \(q\) records the sign of the local product \(si\), since
\[
  ei=i,\qquad ii=-e,\qquad ji=-k,\qquad ki=j.
\]
Thus \(q(s)=1\) exactly when the product \(si\) contributes a negative local
sign.

For \(\varepsilon,\alpha\in\mathbb F_2\), define the auxiliary word-sum
\[
  \mathcal D_{\varepsilon,\alpha}^{(n)}
  :=
  \sum_{\substack{h=h_1\cdots h_n\in\{e,i,j,k\}^n\\
                  \sum_r p(h_r)\equiv \varepsilon\\
                  \sum_r q(h_r)\equiv \alpha}}
  h.
\]
The centralizer components used in the paper are the \(\varepsilon=0\) cases:
\[
  C_n^+=\mathcal D_{0,0}^{(n)},\qquad
  C_n^-=\mathcal D_{0,1}^{(n)},\qquad
  C_n^b=C_n^++C_n^- .
\]
Indeed, the condition
\[
  \sum_r p(h_r)\equiv0
\]
says that \(h\) has an even number of coordinates that anticommute with
\(i\), and hence that \(h\) centralizes
\(\iota_n=ii\cdots i\).  Once \(h\) centralizes \(\iota_n\), the condition
\[
  \sum_r q(h_r)\equiv\alpha
\]
records whether the common product \(h\iota_n=\iota_nh\) has positive sign
(\(\alpha=0\)) or negative sign (\(\alpha=1\)).

For \(a,b\in\{e,i,j,k\}\) and \(\delta\in\mathbb F_2\), define the parity
refinement
\[
  E_{ab,\delta}^{(n)}
  :=
  \sum_{\substack{w\in\{a,b\}^n\\
                  \#\{r:w_r=b\}\equiv \delta}}
  w.
\]
The notation used in the body of the paper is the even part
\[
  E_{ab}^{(n)}=E_{ab,0}^{(n)}.
\]

We shall prove the stronger identity
\[
  \mathcal D_{\varepsilon,\alpha}^{(n)}I_n
  =
  2^{n-1}
  \left(
    E_{ij,\varepsilon}^{(n)}
    +
    (-1)^\alpha E_{ek,\varepsilon}^{(n)}
  \right)
  \tag{A.1}
\]
for all \(n\geq1\) and all \(\varepsilon,\alpha\in\mathbb F_2\).  The desired
formulas are the \(\varepsilon=0\) cases.

For \(n=1\), the four sums \(\mathcal D_{\varepsilon,\alpha}^{(1)}\) are
\[
  \mathcal D_{0,0}^{(1)}=e,\qquad
  \mathcal D_{0,1}^{(1)}=i,\qquad
  \mathcal D_{1,0}^{(1)}=k,\qquad
  \mathcal D_{1,1}^{(1)}=j.
\]
Also \(I_1=e+i\).  Therefore
\[
  eI_1=e+i,\qquad
  iI_1=i-e,\qquad
  kI_1=k+j,\qquad
  jI_1=j-k.
\]
These are exactly the four cases of (A.1):
\[
\begin{aligned}
  \mathcal D_{0,0}^{(1)}I_1
    &=E_{ij,0}^{(1)}+E_{ek,0}^{(1)},\\
  \mathcal D_{0,1}^{(1)}I_1
    &=E_{ij,0}^{(1)}-E_{ek,0}^{(1)},\\
  \mathcal D_{1,0}^{(1)}I_1
    &=E_{ij,1}^{(1)}+E_{ek,1}^{(1)},\\
  \mathcal D_{1,1}^{(1)}I_1
    &=E_{ij,1}^{(1)}-E_{ek,1}^{(1)}.
\end{aligned}
\]
Thus the induction begins.

We now record the recursive decompositions used in the induction step.  Write
words of length \(n\) as words of length \(n-1\) followed by one last symbol.
Since
\[
p(e)=0,\ q(e)=0;\qquad p(i)=0,\ q(i)=1;
\]
\[
p(j)=1,\ q(j)=1;\qquad p(k)=1,\ q(k)=0,
\]
we have
\[
\begin{aligned}
  \mathcal D_{\varepsilon,\alpha}^{(n)}
  ={}&
  \mathcal D_{\varepsilon,\alpha}^{(n-1)}e
  +\mathcal D_{\varepsilon,\alpha+1}^{(n-1)}i\\
  &+\mathcal D_{\varepsilon+1,\alpha+1}^{(n-1)}j
  +\mathcal D_{\varepsilon+1,\alpha}^{(n-1)}k .
\end{aligned}
\tag{A.2}
\]
For example, the term
\(\mathcal D_{\varepsilon,\alpha+1}^{(n-1)}i\) means: take every word in
\(\mathcal D_{\varepsilon,\alpha+1}^{(n-1)}\) and append the symbol \(i\).
Appending \(i\) changes the \(q\)-parity by \(1\) and leaves the \(p\)-parity
unchanged, so the prefix must have parities \((\varepsilon,\alpha+1)\) in
order for the full word to have parities \((\varepsilon,\alpha)\).

Likewise,
\[
  I_n=I_{n-1}(e+i).
  \tag{A.3}
\]

The \(E\)-sums satisfy analogous even/odd recurrences.  Appending the first
symbol in the pair preserves the number of occurrences of the second symbol,
while appending the second symbol flips its parity.  Hence
\[
  E_{ij,\varepsilon}^{(n)}
  =
  E_{ij,\varepsilon}^{(n-1)}i
  +
  E_{ij,\varepsilon+1}^{(n-1)}j,
  \tag{A.4}
\]
and
\[
  E_{ek,\varepsilon}^{(n)}
  =
  E_{ek,\varepsilon}^{(n-1)}e
  +
  E_{ek,\varepsilon+1}^{(n-1)}k.
  \tag{A.5}
\]
For instance, a word in \(\{i,j\}^n\) has an \(\varepsilon\)-parity number
of \(j\)'s either by appending \(i\) to a previous \(\varepsilon\)-parity
word, or by appending \(j\) to a previous \((\varepsilon+1)\)-parity word.

Assume now that (A.1) holds in order \(n-1\).  We prove it in order \(n\).
The local products with the last-coordinate factor \(e+i\) are
\[
  e(e+i)=e+i,\qquad
  i(e+i)=i-e,
\]
\[
  j(e+i)=j-k,\qquad
  k(e+i)=k+j.
  \tag{A.6}
\]
Using (A.2), (A.3), and the digitwise nature of multiplication, we obtain
\[
\begin{aligned}
  \mathcal D_{\varepsilon,\alpha}^{(n)}I_n
  ={}&
  \mathcal D_{\varepsilon,\alpha}^{(n-1)}I_{n-1}(e+i)
  +\mathcal D_{\varepsilon,\alpha+1}^{(n-1)}I_{n-1}(i-e)\\
  &+\mathcal D_{\varepsilon+1,\alpha+1}^{(n-1)}I_{n-1}(j-k)
  +\mathcal D_{\varepsilon+1,\alpha}^{(n-1)}I_{n-1}(k+j).
\end{aligned}
\tag{A.7}
\]

Put
\[
  U_\delta:=E_{ij,\delta}^{(n-1)},\qquad
  V_\delta:=E_{ek,\delta}^{(n-1)},
\]
and set
\[
  s:=(-1)^\alpha.
\]
By the induction hypothesis,
\[
  \mathcal D_{\varepsilon,\alpha}^{(n-1)}I_{n-1}
  =
  2^{n-2}(U_\varepsilon+sV_\varepsilon),
\]
\[
  \mathcal D_{\varepsilon,\alpha+1}^{(n-1)}I_{n-1}
  =
  2^{n-2}(U_\varepsilon-sV_\varepsilon),
\]
\[
  \mathcal D_{\varepsilon+1,\alpha+1}^{(n-1)}I_{n-1}
  =
  2^{n-2}(U_{\varepsilon+1}-sV_{\varepsilon+1}),
\]
and
\[
  \mathcal D_{\varepsilon+1,\alpha}^{(n-1)}I_{n-1}
  =
  2^{n-2}(U_{\varepsilon+1}+sV_{\varepsilon+1}).
\]
Substituting these four expressions into (A.7) gives
\[
\begin{aligned}
  \mathcal D_{\varepsilon,\alpha}^{(n)}I_n
  =
  2^{n-2}\bigl[
  &(U_\varepsilon+sV_\varepsilon)(e+i)
   +(U_\varepsilon-sV_\varepsilon)(i-e)\\
  &+(U_{\varepsilon+1}-sV_{\varepsilon+1})(j-k)
   +(U_{\varepsilon+1}+sV_{\varepsilon+1})(k+j)
  \bigr].
\end{aligned}
\tag{A.8}
\]

Now collect terms.  The \(U_\varepsilon\)-terms are
\[
  U_\varepsilon(e+i)+U_\varepsilon(i-e)=2U_\varepsilon i.
\]
The \(V_\varepsilon\)-terms are
\[
  sV_\varepsilon(e+i)-sV_\varepsilon(i-e)=2sV_\varepsilon e.
\]
The \(U_{\varepsilon+1}\)-terms are
\[
  U_{\varepsilon+1}(j-k)+U_{\varepsilon+1}(k+j)
  =2U_{\varepsilon+1}j.
\]
The \(V_{\varepsilon+1}\)-terms are
\[
  -sV_{\varepsilon+1}(j-k)+sV_{\varepsilon+1}(k+j)
  =2sV_{\varepsilon+1}k.
\]
Therefore (A.8) becomes
\[
  \mathcal D_{\varepsilon,\alpha}^{(n)}I_n
  =
  2^{n-1}
  \left(
    U_\varepsilon i+U_{\varepsilon+1}j
    +
    s(V_\varepsilon e+V_{\varepsilon+1}k)
  \right).
\]
Using the recurrences (A.4) and (A.5), this is
\[
  \mathcal D_{\varepsilon,\alpha}^{(n)}I_n
  =
  2^{n-1}
  \left(
    E_{ij,\varepsilon}^{(n)}
    +
    (-1)^\alpha E_{ek,\varepsilon}^{(n)}
  \right),
\]
which is (A.1) in order \(n\).  The induction is complete.

Taking \(\varepsilon=0\) in (A.1) gives
\[
  C_n^+I_n
  =
  2^{n-1}
  \left(E_{ij}^{(n)}+E_{ek}^{(n)}\right),
\]
and
\[
  C_n^-I_n
  =
  2^{n-1}
  \left(E_{ij}^{(n)}-E_{ek}^{(n)}\right).
\]
Adding these two identities gives
\[
  C_n^bI_n=2^nE_{ij}^{(n)}.
\]

The reverse-order identities follow from the \(I\)-axis reflection.  This
reflection fixes \(e\) and \(i\), swaps \(j\) and \(k\), fixes \(I_n\), and
fixes the two centralizer components \(C_n^+\) and \(C_n^-\).  Since it is an
odd digit permutation, it reverses multiplication order.  Applying it to the
three identities above yields
\[
  I_nC_n^-
  =
  2^{n-1}
  \left(E_{ik}^{(n)}-E_{ej}^{(n)}\right),
\]
\[
  I_nC_n^+
  =
  2^{n-1}
  \left(E_{ik}^{(n)}+E_{ej}^{(n)}\right),
\]
and
\[
  I_nC_n^b=2^nE_{ik}^{(n)}.
\]

\section{A note on earlier notation and integer sequences}

The order-two case was the first case studied computationally.  In some of the
early OEIS-related work, before the general triangular-coordinate model used in
this paper, order-two basis elements were often written with arrows.  For
example, what is written here as \(ei\) and \(ie\) appeared as \(\li\) and
\(\ri\), respectively, and similarly for \(j\) and \(k\).  This notation was
useful for the original order-two experiments, and it still occurs in a number
of OEIS entries and comments related to floretions.  The word notation used in
the present paper is adopted because it extends uniformly to arbitrary order.

Earlier experiments with order-two floretions produced integer sequences by
inspecting basis coefficients in the powers
\[
  X,\ X^2,\ X^3,\ldots .
\]
The matrix-algebra viewpoint explains why linear recurrence phenomena appear.
After complexification, the order-two algebra is
\[
  A_2\otimes_{\mathbb R}\mathbb C\simeq M_4(\mathbb C).
\]
Thus the Cayley--Hamilton theorem gives a linear recurrence of degree at most
\(4\) for every fixed basis coefficient of \(X^m\).  This accounts for one part
of the older Fibonacci--Lucas--Pell experiments.

The broader subject of floretion-generated integer sequences is larger.  Some
examples arose from coefficient-extraction procedures, from identities involving
positive and negative coefficient parts, from noncommutative word expansions
related to necklace counting, and from musical sequence examples associated
with the OEIS and related projects; see, for example, the ``Sound of Sequences''
presentation of a floretion-generated sequence \cite{SoundOfSequences}.  These
directions are mentioned only for historical context; they are not used in the
coordinate, symmetry, or centralizer results above.

The following small example records a direct exit ramp from the present
coordinate model to second-order linear recurrences.

\begin{exa}[A second-order recurrence inside order two]
Let
\[
  E'=\frac14(ie+ei+ii+jj+kk+jk+kj+ee)
\]
and let
\[
  X=Aei+Bej+Cek .
\]
Put
\[
  Z=E'X .
\]
A direct calculation in the order-two floretion algebra gives
\[
  Z^3+A Z^2+BC\,Z=0 .
\]
Consequently, for every basis word \(u\), if \(a_m\) denotes the coefficient of
\(u\) in \(Z^m\), then for \(m\geq 3\)
\[
  a_m=-A\,a_{m-1}-BC\,a_{m-2}.
\]
For example, choosing \(A=-1\), \(B=1\), and \(C=-1\) gives
\[
  a_m=a_{m-1}+a_{m-2}.
\]
With this choice, the coefficient of \(ij\) in \(Z^m\) is
\[
  \frac12,\ \frac12,\ 1,\ \frac32,\ \frac52,\ 4,\ldots,
\]
that is, one half of the Fibonacci sequence with the usual initial values
\(F_1=F_2=1\).
\end{exa}

\begin{exa}[A Padovan recurrence inside order two]
Let
\[
  E=\frac14(3ee+ii+jj-kk)
\]
and let
\[
  X=\frac12(ie+ij+ik-je+ji+kj).
\]
Put
\[
  Y=EX .
\]
A direct calculation in the order-two floretion algebra gives
\[
  Y^4=Y^2+Y .
\]
Consequently, for every basis word \(u\), if \(a_m\) denotes the coefficient of
\(u\) in \(Y^m\), then for \(m\geq 1\)
\[
  a_{m+3}=a_{m+1}+a_m.
\]
In particular,
\[
  4[ik]Y^m=1,\ 1,\ 1,\ 2,\ 2,\ 3,\ 4,\ 5,\ 7,\ 9,\ 12,\ldots
\]
for \(m=1,2,3,\ldots\), giving the Padovan sequence with initial values
\(1,1,1\).
\end{exa}

These examples are included only as pointers to the recurrence side of the
subject.  They also suggest an inverse recurrence problem.  If a distinguished
coefficient sequence, such as the identity coefficient \([ee\cdots e]X^m\), satisfies a linear
recurrence of prescribed order, what geometric or algebraic restrictions does
this impose on the support tiles and coefficients of \(X\)?

\section*{Statements and Declarations}

\noindent\textbf{Funding.}
The author received no financial support for the research, authorship, or
publication of this article.

\medskip
\noindent\textbf{Conflict of interest.}
The author declares that he has no conflict of interest.

\medskip
\noindent\textbf{Use of generative AI.}
During the preparation and revision of this manuscript, the author used
generative AI tools for language editing, structural suggestions, and
exploratory mathematical discussion, including searching for relevant
literature. All suggestions incorporated into the present manuscript were
critically assessed and mathematically worked through by the author. The
author takes full responsibility for the present content of the manuscript.

\end{document}